\newtheorem{definition}{Definition}[section]
\newtheorem{example}{Example}[section]
\newtheorem{remark}{Remark}[section]
\newtheorem{corollary}{Corollary}[section]
\newtheorem{theorem}{Theorem}[section]
\newtheorem{proposition}{Proposition}[section]
\newtheorem{lemma}{Lemma}[section]
\def\multiset#1#2{\ensuremath{\left(\kern-.3em\left(\genfrac{}{}{0pt}{}{#1}{#2}\right)\kern-.3em\right)}}
\numberwithin{equation}{section}
\begin{document}
	\title[ On an integral representation of the normalized trace of the $k$-th symmetric power of  matrices and some applications ]{
 On an integral representation of the normalized trace of the $k$-th symmetric tensor power of  matrices and some applications}

\author[H. Issa]{Hassan Issa$^{*}$}
\author[H. Abbass]{Hassan Abbas$^*$}
\author[B. Mourad]{Bassam Mourad$^*$}
\address{*Department of Mathematics, Faculty of Science I,
	Lebanese University, Beirut, Lebanon}
\email[corresponding author]{hissa@uni-math.gwdg.de}
	\keywords{Integral formula, trace of symmetric powers, homogeneous polynomials.}
	\subjclass [2010] {Primary 15A15, 15A60; Secondary 32A26}
			\begin{abstract}
		Let $A$ be an $n\times n$  matrix and let $\vee^k A$ be its $k$-th symmetric tensor product.   We express the normalized trace of $\vee^k A$ as an integral of the $k$-th powers of the numerical values of $A$ over the unit sphere $\mathbb{S}^{n}$ of $\mathbb{C}^{n}$ with respect to the normalized Euclidean surface measure. Equivalently, this expression in turn can be interpreted as an integral representation for the (normalized) complete symmetric polynomials over $\mathbb{C}^n$. As applications,  we present  a new proof for the MacMahon Master Theorem in enumerative combinatorics. Then, our next application deals with  a
 generalization of  the work of Cuttler et al. in \cite{cuttler} concerning the monotonicity of products of complete symmetric polynomials. In the process, we give  a solution to an open problem that was raised by I. Rovenţa and L. E. Temereanca in \cite{roventa}.
	\end{abstract}
\maketitle
	\bigskip
	\section{Introduction}
	Given $A$  in the space of all $n\times n$ complex matrices $M_n(\mathbb{C})$, we denote its tace  by $tr(A)$  and we shall write $Tr(A):=\frac{tr(A)}{n}$ for the normalized trace. It is well known (\cite{hol,kania}) that $Tr(A)$ can be obtained via the following functional representation
	\begin{equation}\label{dr1}
	\int_{\mathbb{S}^{n}}<A\xi,\xi>d\sigma(\xi)=Tr(A),
	\end{equation}
	where $\mathbb{S}^{n}$ is the unit sphere of $\mathbb{C}^{n}$, $\sigma$ is the probability Lebesgue measure on $\mathbb{S}^{n}$ and $<\cdot,\cdot>$ is the standard inner product on $\mathbb{C}^{n}$. The above formula states that the normalized  trace of a matrix is the expectation of the numerical values of the matrix computed with respect to $d\sigma$. Noting the linearity on both sides of (\ref{dr1}), one obtains the above formula by checking it on matrix bases. When linearity is lost due to some perturbation of the numerical values, new techniques are needed to obtain a reasonable formula. Our main objective inthis paper is to find  the expectation of natural powers of the numerical values computed with respect to $d\sigma$.  More precisely, given two natural numbers $n,k\in\mathbb{N}$ we aim to  find an expression for
	\begin{equation}\label{dr2}
	\int_{\mathbb{S}^{n}}\big(<A\xi,\xi>\big)^kd\sigma(\xi)
	\end{equation}
The preceding integral is of special interest notably in  the analysis of some reproducing kernel Hilbert spaces of analytic functions. More precisely, when addressing  the analysis of the Hardy space $H^2(\mathbb{S}^{n})$ over the unit ball $B(0,1)$ of $\mathbb{C}^{n}$, we recall that the Szeg\"{o} projection $P_S$ is the orthogonal projection of $L^2(\mathbb{S}^{n})$ onto $H^2(\mathbb{S}^{n})$ and is given by
	\begin{equation}\label{dr3}
	P_Sf(z)=\int_{\mathbb{S}^{n}}\frac{f(\xi)}{(1-<v,\xi>)^n}d\sigma(\xi), \quad\mbox{for } v\in B(0,1).
	\end{equation}
	This integral representation for $P_S$ is due to a classical series representation for the reproducing kernel in terms of an orthonormal basis (cf. \cite{saitoh}). Note that for a fixed $v\in B(0,1)$, if $\delta_v$ is the pointwise evaluation at $v$ then the  linear functional $\delta_v\circ P_S$ is  expressed as an infinite series of integral (scalar valued) operators of the form $$c_{n,k}\int_{\mathbb{S}^{n}}(<v,\xi>)^kf(\xi)d\sigma(\xi),$$ where  $c_{n,k}=\binom{n+k-1}{k}$. Thus, it is natural to refer to such operators as the "building blocks" for the Szeg\"{o} projection. In order to clarify the connection of such analysis to our work, we consider a collection of integral transforms $\{T_{A,k}\}_{_{A\in M_n(\mathbb{C}), k\in\mathbb{N}}}$ on $L^2(\mathbb{S}^{n})$ defined by
	\begin{equation}\label{dr4}
	T_{A,k}f(\mu)=\int_{\mathbb{S}^{n}}(<A\mu,\xi>)^kf(\xi)d\sigma(\xi), \quad\mbox{for } f\in L^2(\mathbb{S}^{n}), \quad\mu\in\mathbb{S}^{n}.
	\end{equation}
	Let $e_1=(1,0,\cdots,0)\in\mathbb{C}^n$,  then $\{T_{A,k}f(e_1)\}_{_{A\in M_n(\mathbb{C})}}$ provides a generalization for the "building blocks" of $P_S$. Indeed, given a non-zero $v\in B(0,1)$ consider $A=\lVert v\rVert_2 U$ where $\lVert \cdot\rVert_2$ is the Euclidean norm and  $U$ is a unitary matrix satisfying $Ue_1=\frac{v}{\lVert v\rVert_2}$,  then $$T_{A,k}f(e_1)=\int_{\mathbb{S}^{n}}(<v,\xi>)^kf(\xi)d\sigma(\xi).$$
	It is easy to see that $T_{A,k}$ is a finite rank operator and apparently  the permutation in the $\mu$-coordinate might not allow us to get a canonical decomposition of $T_{A,k}$ for general $A$. This in turn leads to difficulties in  computing the trace of $T_{A,k}$. Alternatively,  as the  kernel $(<A\mu,\xi>)^k$ is continuous in the $\mu$ and $\xi$ variables then the trace of $T_{A,k}$ is given by (\ref{dr2}).
A similar argument holds for the Hilbert-Schmidt norm $\lVert\cdot\rVert_{(2)}$ of $T_{A,k}$ which in support of our line of investigation,  turns out to satisfy the following identity;
	\begin{equation}\label{dr5}
	c^\frac{1}{2k}_{n,k}\lVert T_{A,k}\rVert^\frac{1}{k}_{(2)}=\left(\int_{\mathbb{S}^{n}}\lVert A\xi\rVert_2^{2k}d\sigma(\xi)\right)^\frac{1}{2k}.
	\end{equation}
In this paper, we shall use matrix analysis techniques to find the integral (\ref{dr2}) and then we apply our results to enumerative combinatorics. 
More explicitly, our main objective is to prove the following main result.	
	\begin{theorem}
	Let $A\in M_n(\mathbb{C})$ and let $k\in\mathbb{N}$. Denote by $Tr(\vee^kA)=\frac{tr(\vee^kA)}{c_{n,k}}$ the normalized trace of the k-th symmetric power of A then
	\begin{equation}
	\int_{\mathbb{S}^{n}}\big(<A\xi,\xi>\big)^kd\sigma(\xi)=Tr(\vee^kA).
	\end{equation}
\end{theorem}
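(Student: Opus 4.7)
The plan is to combine unitary invariance with a polynomial-continuation argument from the Hermitian slice.

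\textbf{Step 1 (polynomial nature).} Both sides are polynomials in the $n^2$ entries $a_{ij}$ of $A$, of total degree $k$. The integrand $\langle A\xi,\xi\rangle = \sum_{i,j} a_{ij}\xi_j\bar\xi_i$ is linear in the $a_{ij}$ with $\xi$-dependent coefficients, so the left-hand side is a polynomial in the $a_{ij}$ after integration. The right-hand side equals $h_k(\lambda_1,\ldots,\lambda_n)$, the $k$-th complete symmetric polynomial in the eigenvalues of $A$, which by Newton's identities is a polynomial in the power sums $tr(A^j)$, hence in the entries.

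\textbf{Step 2 (Hermitian case).} Both sides are invariant under $A\mapsto U^*AU$ for any unitary $U$: for the integral, change variable $\eta = U\xi$ using the invariance of $\sigma$; for the trace, $\vee^k(U^*AU) = (\vee^k U)^*(\vee^k A)(\vee^k U)$. Thus for Hermitian $A$ we may take $A = D = \mathrm{diag}(\lambda_1,\ldots,\lambda_n)$. Then $\langle D\xi,\xi\rangle = \sum_i \lambda_i |\xi_i|^2$, and the multinomial expansion together with the classical sphere moment formula
\begin{equation*}
\int_{\mathbb{S}^{n}} |\xi_1|^{2m_1}\cdots |\xi_n|^{2m_n}\, d\sigma(\xi) = \frac{(n-1)!\, m_1!\cdots m_n!}{(n+k-1)!}, \qquad m_1+\cdots+m_n = k,
\end{equation*}
collapses the left-hand side to $\frac{1}{c_{n,k}}\sum_{|m|=k}\lambda_1^{m_1}\cdots\lambda_n^{m_n} = \frac{h_k(\lambda)}{c_{n,k}} = Tr(\vee^k D)$.

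\textbf{Step 3 (analytic continuation).} For arbitrary $A \in M_n(\mathbb{C})$, write $A = H + iK$ with $H = \tfrac{1}{2}(A+A^*)$ and $K = \tfrac{1}{2i}(A-A^*)$ Hermitian, and set
\begin{equation*}
g(z) := \int_{\mathbb{S}^{n}} \langle (H+zK)\xi,\xi\rangle^k\, d\sigma(\xi) - Tr(\vee^k(H+zK)), \qquad z \in \mathbb{C}.
\end{equation*}
By Step 1, $g$ is a polynomial in $z$. For real $z$ the matrix $H+zK$ is Hermitian, so $g(z) = 0$ by Step 2. A polynomial vanishing on $\mathbb{R}$ is identically zero, and $g(i)=0$ gives the identity for $A$.

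The main subtlety lies in Step 3: both sides are holomorphic in $A$, but Hermitian matrices form only a totally real submanifold of $M_n(\mathbb{C})$, so agreement there must be promoted to agreement everywhere through the explicit polynomial character established in Step 1. A more conceptual alternative would proceed via representation theory: the operator $T := \int_{\mathbb{S}^{n}} \xi^{\otimes k}\otimes(\xi^{\otimes k})^*\, d\sigma(\xi)$ commutes with $\vee^k U$ for every unitary $U$, hence is scalar on the irreducible $U(n)$-module $\vee^k \mathbb{C}^n$; the scalar is $1/c_{n,k}$ by a trace computation, and combining this with $(\vee^k A)\xi^{\otimes k} = (A\xi)^{\otimes k}$ recovers the theorem directly.
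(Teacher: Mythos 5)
Your proof is correct, but it takes a genuinely different route from the paper's. The paper applies Schur's decomposition to reduce to an \emph{upper triangular} matrix, expands $\langle A\xi,\xi\rangle^k$ by the multinomial theorem over multi-indices $\alpha\in M_{n^2}(\mathbb{N}_0)$, and then needs a separate combinatorial lemma (Lemma \ref{ldr1}): an upper triangular $\alpha$ whose $l$-th row sum equals its $l$-th column sum for every $l$ must be diagonal, so that after invoking the orthogonality of monomials only the diagonal $\alpha$ survive and the sum collapses to $h_k(\lambda)/c_{n,k}$. You instead reduce only the \emph{Hermitian} case to the diagonal case, where the surviving-terms analysis is immediate (the integrand is already $\sum_i\lambda_i|\xi_i|^2$ and no off-diagonal bookkeeping is needed), and then promote the identity to all of $M_n(\mathbb{C})$ by observing that both sides are polynomials in the entries, restricting to the pencil $H+zK$, and using that a one-variable polynomial vanishing on $\mathbb{R}$ vanishes identically. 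Both arguments share the same computational core, namely the sphere moment formula of Lemma \ref{ldr2}. What your route buys is the elimination of the triangular combinatorics at the price of the continuation step, whose justification you correctly flag and supply (the polynomial character in Step 1 is exactly what makes the totally-real restriction sufficient); what the paper's route buys is a single uniform computation valid for every $A$ at once, with the eigenvalues appearing directly as the diagonal of the Schur form. Your closing representation-theoretic remark --- that $\int_{\mathbb{S}^n}\xi^{\otimes k}\otimes(\xi^{\otimes k})^*\,d\sigma$ intertwines the irreducible action of $U(n)$ on $\vee^k\mathbb{C}^n$ and is therefore the scalar $1/c_{n,k}$ times the identity --- is also sound and is arguably the most conceptual of the three arguments, though as stated it is only a sketch.
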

Since $Tr(\vee^kA)$ is simply the evaluation of the normalized complete $k$-homogenous polynomials $H_k$ at the eigenvalues of $A$, the preceding integral can be then interpreted as a representation for such polynomials. A direct consequence of the preceding equation is the positivity of $H_{2k}$ on $\mathbb{R}^n$ which has been proved by D. B. Hunter in \cite{hunter}.  A simpler proof to this  fact  that is  based on Schur convexity of the complete symmetric polynomials, was obtained by I. Rovena and L. E. Temereanca in \cite{roventa}. However, we believe that in the context of our work, we offer the simplest proof among the previously mentioned work.

In connection with the analysis of symmetric polynomials, we recall that the Harish-Chandra-Itzykson-Zuber integral which was used by S. Sra in  \cite{sra1} to express the normalized Schur polynomials. Via such a representation together with a technique of Schur's convexity, the author in \cite{sra1} presented  the sufficiency for a conjecture that was proposed in \cite{cuttler} on the monotonicity (via a lexicographic order) for (a product of) normalized Schur polynomials. More explicitly, in \cite{cuttler} the authors presented many inequalities for symmetric functions and in particular, they proved the monotonicity for (a product of) normalized  complete symmetric polynomials on $\mathbb{R}^n_+$. Another consequence of our results lies in exploiting   our integral representation together with a result due Y. L. Tong \cite{tong} on  Schur convexity  to prove 
 that this monotonicity   holds also for  (a product of) normalized  complete symmetric polynomials of even degrees on $\mathbb{R}^n$ not only on $\mathbb{R}^n_+$.

	The paper is organized as follows. Section 2 is devoted to proving our main theorem. In Section 3  we provide a new proof for Macmahon Master Theorem and we obtain an integral representation for a particular determinant. In the last section,   we present  a generalization to the   monotonicity result  in \cite{cuttler} and in the process,  we give a solution to an open problem that was raised  in \cite{roventa}.
	
	\section{Integral formula for the trace of symmetric powers}
	There is a natural way  to provide an exact formula for (\ref{dr2}) by integrating polynomials over the unit sphere of arbitrary dimensional complex spaces. For this reason, it is more convenient to use the standard notation from the theory of complex analysis of several variables. For a multi-index  $\alpha=(\alpha_1,\alpha_2,\cdots,\alpha_n)\in\mathbb N^n_0$ of non-negative integers and for $z\in \mathbb C^n$, we shall write
	$$\overline{z}=(\overline{z_1},\overline{z_2},\cdots,\overline{z_n}),\quad z^{\alpha}=z_1^{\alpha_1}.z_2^{\alpha_2}\cdots z_n^{\alpha_n}, \quad \lvert\alpha\rvert=\alpha_1+\alpha_2+\cdots+\alpha_n, \quad\mbox{and}\quad \alpha!=\alpha_1!\alpha_2!\cdots\alpha_n!.$$
	
	In connection to matrix analysis, given $A=(a_{ij})\in M_n(\mathbb{C})$ we denote by $a:=vec(A^T)$ the vectorization of its transpose  i.e. $a$ is the vector obtained by stacking the rows of A on top of each other (cf. \cite{harver,hndrsn}) so that
	$$a=(a_{11},a_{12},\cdots,a_{1n},a_{21},a_{22}\cdots,a_{2n},\cdots,a_{n1},\cdots,a_{nn})^T\in\mathbb{C}^{n^2}.$$
	Whenever $\alpha\in M_{n^2}(\mathbb{N}_0)$, we will not distinguish between its matrix form and its representation $vec(\alpha^T)\in \mathbb{N}_0^{n^2}$. For example, given $A=(a_{ij})\in M_n(\mathbb{C})$ and $\alpha=(\alpha_{ij})\in M_{n^2}(\mathbb{N}_0)$ we  write $|\alpha|=\sum_{ij} \alpha_{ij}$ and
	$$a^\alpha=\prod_{i,j}^{n^2}a^{\alpha_{ij}}_{ij}.$$
	
	Preliminary to our consideration of the integral formula, we shall express the integrand in (\ref{dr2})  using the above notation.
	\begin{proposition}\label{pdr1}
		Let $A\in M_n(\mathbb{C})$ and $k\in\mathbb{N}$. Given $z\in\mathbb{C}^n$, the following holds:
		\begin{equation}\label{e1}\Big(<Az,z>\Big)^k=\sum_{\substack{|\alpha|=k \\ \alpha\in M_{n^2}(\mathbb{N}_0)}}\binom{k}{\alpha}a^\alpha z^{\big(\sum_i\alpha_{i1},\sum_i\alpha_{i2},\cdots,\sum_i\alpha_{in}\big)} \  \overline{z}^{\big(\sum_j\alpha_{1j},\sum_j\alpha_{2j},\cdots,\sum_j\alpha_{nj}\big)},\end{equation} 
		where $\binom{k}{\alpha}=\frac{k!}{\alpha !}.$
	\end{proposition}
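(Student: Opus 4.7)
The plan is a direct application of the multinomial theorem to the quadratic form $\langle Az,z\rangle$, together with careful bookkeeping to match the exponents with the stated column/row sums.

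First I would rewrite the integrand in coordinate form. Using the standard Hermitian inner product on $\mathbb{C}^n$ and writing $A=(a_{ij})$, one obtains
\begin{equation*}
\langle Az,z\rangle=\sum_{i=1}^{n}(Az)_i\,\overline{z_i}=\sum_{i,j=1}^{n}a_{ij}\,z_j\,\overline{z_i},
\end{equation*}
which is a sum of $n^{2}$ scalar quantities indexed by pairs $(i,j)\in\{1,\dots,n\}^{2}$. Since these pairs are in bijection with the entries of an $n\times n$ matrix (matching the vectorization convention set up for $a$ and for the multi-indices $\alpha\in M_{n^{2}}(\mathbb{N}_0)$), raising to the $k$-th power invites the multinomial theorem applied to these $n^{2}$ summands.

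Next I would apply the multinomial expansion. Setting $x_{ij}:=a_{ij}z_j\overline{z_i}$ we get
\begin{equation*}
\Big(\langle Az,z\rangle\Big)^{k}=\Bigl(\sum_{i,j}x_{ij}\Bigr)^{k}=\sum_{|\alpha|=k}\binom{k}{\alpha}\prod_{i,j}x_{ij}^{\alpha_{ij}}=\sum_{|\alpha|=k}\binom{k}{\alpha}\prod_{i,j}a_{ij}^{\alpha_{ij}}\,z_j^{\alpha_{ij}}\,\overline{z_i}^{\alpha_{ij}},
\end{equation*}
where the sum ranges over $\alpha\in M_{n^2}(\mathbb N_0)$ with $|\alpha|=k$ and $\binom{k}{\alpha}=k!/\alpha!$ is the multinomial coefficient in the notation of the paper.

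Finally I would regroup the three factors. The product $\prod_{i,j}a_{ij}^{\alpha_{ij}}$ is by definition $a^{\alpha}$. Holding $j$ fixed and summing the exponents $\alpha_{ij}$ of $z_j$ over $i$ yields the column-sum exponent $\sum_{i}\alpha_{ij}$, so $\prod_{i,j}z_j^{\alpha_{ij}}=\prod_{j}z_j^{\sum_i\alpha_{ij}}=z^{(\sum_i\alpha_{i1},\dots,\sum_i\alpha_{in})}$; symmetrically, holding $i$ fixed and summing over $j$ produces the row-sum exponent on $\overline{z_i}$, giving $\overline{z}^{(\sum_j\alpha_{1j},\dots,\sum_j\alpha_{nj})}$. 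Substituting back recovers exactly \eqref{e1}.

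The argument is purely formal and contains no real obstacle; the only point that might trip a reader is verifying that the column sums (resp.\ row sums) appear on the holomorphic (resp.\ antiholomorphic) coordinates and not the other way around — this is fixed by the convention $\langle Az,z\rangle=\sum_{i,j}a_{ij}z_j\overline{z_i}$, in which $z_j$ is paired with the column index $j$ and $\overline{z_i}$ with the row index $i$.
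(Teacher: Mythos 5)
Your proof is correct and follows essentially the same route as the paper: expand $\langle Az,z\rangle=\sum_{i,j}a_{ij}z_j\overline{z_i}$, apply the multinomial theorem over the $n^2$ summands indexed by $\alpha\in M_{n^2}(\mathbb{N}_0)$, and regroup so that $z_j$ collects the column sum $\sum_i\alpha_{ij}$ and $\overline{z_i}$ the row sum $\sum_j\alpha_{ij}$. Your closing remark about which index lands on the holomorphic versus antiholomorphic factor is exactly the one bookkeeping point that matters, and you resolve it correctly.
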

	\begin{proof}
		Applying the multinomial theorem to $\Big(<Az,z>\Big)^k=\Bigg(\sum_{i,j}a_{ij}\overline{z_i}z_j\Bigg)^k$ leads to:
		
		\begin{align*}
		\Big(<Az,z>\Big)^k&=\sum_{\substack{|\alpha|=k \\ \alpha\in\mathbb{N}_0^{n^2}}}\binom{k}{\alpha}\prod_{i,j}^{n^2}a^{\alpha_{ij}}_{ij} \ \overline{z_i}^{\alpha_{ij}}z_j^{\alpha_{ij}}=\sum_{\substack{|\alpha|=k \\ \alpha\in\mathbb{N}_0^{n^2}}}\binom{k}{\alpha}\Bigg[\prod_{i,j}^{n^2}a^{\alpha_{ij}}_{ij}\Bigg]\prod_{i,j}^{n^2}\overline{z_i}^{\alpha_{ij}}z_j^{\alpha_{ij}}\\
		&=\sum_{\substack{|\alpha|=k \\ \alpha\in\mathbb{N}_0^{n^2}}}\binom{k}{\alpha}a^\alpha\prod_{j=1}^n\Bigg[\prod_{i=1}^n\overline{z_i}^{\alpha_{ij}}z_j^{\alpha_{ij}}\Bigg]=\sum_{\substack{|\alpha|=k \\ \alpha\in\mathbb{N}_0^{n^2}}}\binom{k}{\alpha}a^\alpha\prod_{j=1}^nz_j^{\sum_i\alpha_{ij}} \ \overline{z}^{\big(\alpha_{1j},\alpha_{2j},\cdots,\alpha_{nj}\big)}\\
		&=\sum_{\substack{|\alpha|=k \\ \alpha\in M_{n^2}(\mathbb{N}_0) }}\binom{k}{\alpha}a^\alpha z^{\big(\sum_i\alpha_{i1},\sum_i\alpha_{i2},\cdots,\sum_i\alpha_{in}\big)} \ \overline{z}^{\big(\sum_j\alpha_{1j},\sum_j\alpha_{2j},\cdots,\sum_j\alpha_{nj}\big)}.
		\end{align*}
	\end{proof}
	\begin{remark}\label{rdr1}
		Suppose that  $A$ is an  upper triangular matrix then the sum on the right hand side of (\ref{e1}) reduces to those   $\alpha\in M_{n^2}(\mathbb{N}_0)$ satisfying  $|\alpha|=k$ and $\alpha$ is only upper triangular. Indeed,  if  $\alpha_{i_0j_0}\neq0$ for certain $i_0>j_0$ then as $A$ is upper triangular we have $a^\alpha=0$.
	\end{remark}
	Due to the orthogonality of the monomials on $L^2(\mathbb{S}^{n})$, a natural condition is then employed on the row sums as well as the column sums of $\alpha\in M_{n^2}(\mathbb{N}_0)$ when integrating (\ref{e1}). The next lemma examines the type of $\alpha$ under  such  condition.
	\begin{lemma}\label{ldr1}
		Let 	$\alpha\in M_{n^2}(\mathbb{N}_0)$ be an upper triangular matrix  satisfying the condition:
		\begin{equation}\label{dr11}
		\sum_{i=1}^n\alpha_{il}=\sum_{j=1}^n\alpha_{lj}\ \  \mbox{ for every  }  l=1,2\cdots,n,
		\end{equation}
		then $\alpha$ is a diagonal matrix.
	\end{lemma}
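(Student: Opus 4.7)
The plan is to exploit upper-triangularity to truncate both sides of the balance condition (\ref{dr11}), and then cascade an induction on $l$ from $l=1$ upwards.

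Since $\alpha$ is upper triangular, $\alpha_{ij}=0$ whenever $i>j$. Hence the $l$-th column sum collapses to $\sum_{i=1}^{l}\alpha_{il}$ and the $l$-th row sum collapses to $\sum_{j=l}^{n}\alpha_{lj}$. Substituting into (\ref{dr11}), the condition to be used becomes
\begin{equation*}
\sum_{i=1}^{l}\alpha_{il}\;=\;\sum_{j=l}^{n}\alpha_{lj},\qquad l=1,2,\dots,n.
\end{equation*}

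The base case $l=1$ gives $\alpha_{11}=\alpha_{11}+\alpha_{12}+\cdots+\alpha_{1n}$; since the entries are non-negative integers, this forces $\alpha_{1j}=0$ for all $j>1$, so row $1$ has only its diagonal entry. For the induction step, assume rows $1,\dots,l-1$ have already been shown to be zero off the diagonal, i.e.\ $\alpha_{rj}=0$ for $1\le r<l$ and $r<j\le n$. In particular $\alpha_{rl}=0$ for $r<l$, so the $l$-th column sum reduces further to $\alpha_{ll}$. The balance equation at index $l$ then reads $\alpha_{ll}=\alpha_{ll}+\sum_{j=l+1}^{n}\alpha_{lj}$, and non-negativity again forces $\alpha_{lj}=0$ for all $j>l$. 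Iterating through $l=1,\dots,n$ removes every super-diagonal entry, so $\alpha$ is diagonal.

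There is no genuine obstacle here: the only thing to get right is the bookkeeping that ensures the inductive hypothesis really does kill the strictly-upper part of the $l$-th column before one applies (\ref{dr11}) at step $l$, and that the non-negativity of the $\alpha_{ij}$ is what turns a zero-sum identity into the vanishing of each summand.
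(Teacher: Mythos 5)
Your proof is correct and follows essentially the same route as the paper's: an induction on the row index in which upper-triangularity and the inductive hypothesis collapse the $l$-th column sum to $\alpha_{ll}$, and non-negativity of the entries then kills the super-diagonal part of row $l$. No gaps.
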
	
	\begin{proof}
		We shall proceed by induction. For $l=1$, we have $\alpha_{11}=\alpha_{11}+\sum_{j=2}^n\alpha_{1j}$ so that $\alpha_{1j}=0$ for all $j=2,\cdots n$. Assume that for all $k\leq l_0$ we have $\alpha_{kj}=0$ whenever $j\neq k$. By (\ref{dr11}) the $(l_0+1)$ column sum of $\alpha$ satisfies
		
		$$\sum_{i=1}^n\alpha_{i,l_0+1}=\sum_{j=1}^n\alpha_{l_0+1,j}$$
		Notice that, 	$$\sum_{i=1}^n\alpha_{i,l_0+1}=\sum_{i\leq l_0}^n\underbrace{\alpha_{i,l_0+1}}_{=0} +\alpha_{l_0+1,l_0+1}+ \sum_{i> l_0+1}^n\underbrace{\alpha_{i,l_0+1}}_{=0}$$
		The first sum on the right hand side of the above equation vanishes by making se of the induction hypothesis and the second sum vanishes as $\alpha$ is upper triangular.  Similarly, by using  the fact that $\alpha$ is upper triangular we get
		$$\sum_{j=1}^n\alpha_{l_0+1,j}=\sum_{j\leq l_0}^n\underbrace{\alpha_{l_0+1,j}}_{=0} +\alpha_{l_0+1,l_0+1}+ \sum_{j> l_0+1}^n\alpha_{l_0+1,j}.$$
		We conclude that
		$$\alpha_{l_0+1,l_0+1}=\alpha_{l_0+1,l_0+1}+ \sum_{j> l_0+1}^n\alpha_{l_0+1,j},$$
		which  in turn yields that $\alpha_{l_0+1,j}=0$ for all $j> l_0+1$. Thus, $\alpha$ is  diagonal and the proof is complete.
	\end{proof}
	Now, for our purposes we need  a well-known result for the exact value for integration of monomials on spheres.  Such explicit value plays an important role in real, complex and harmonic analysis (cf. for example \cite{axler,folland,krantz}). In particular, for the study of (commuting) Toeplitz operators on the Segal-Bargmann space \cite{bauer,Issa,Issa1}. A simple proof using complex analysis for the next lemma  could be found in \cite{foll} and is due to V. Bargmann and E. Nelson as indicated by G.B. Folland in \cite{foll} (see also Proposition 1.4.9 in \cite{rudin}).
	\begin{lemma}\label{ldr2}
		Let $\sigma$ be the normalized probability measure on the unit sphere $\mathbb{S}^n$  of $\mathbb{C}^n$. For any $\alpha,\beta\in\mathbb{N}^n_0$ we have
		
		\begin{equation}\label{1e16}
		\int_{\mathbb{S}^{n}}\xi^\alpha\overline{\xi}^\beta d\sigma(\xi)=\begin{cases}
		\dfrac{(n-1)!\alpha!}{(n-1+\lvert \alpha\rvert)!} \quad& \alpha=\beta;\\
		0 \quad& \alpha\neq \beta.
		\end{cases}
		\end{equation}
	\end{lemma}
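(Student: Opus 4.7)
My plan is to prove the two cases of the lemma separately, exploiting different symmetries of $\sigma$. First, for the off-diagonal case $\alpha\neq\beta$, I would use the invariance of $\sigma$ under the $n$-torus action $\xi\mapsto(e^{i\theta_1}\xi_1,\dots,e^{i\theta_n}\xi_n)$; substituting this rotate for $\xi$ multiplies the integrand by $e^{i\sum_j(\alpha_j-\beta_j)\theta_j}$ while leaving $d\sigma$ unchanged. Averaging both sides over $(\theta_1,\dots,\theta_n)\in[0,2\pi]^n$ produces a factor $\prod_j \frac{1}{2\pi}\int_0^{2\pi}e^{i(\alpha_j-\beta_j)\theta_j}d\theta_j$, which vanishes unless $\alpha_j=\beta_j$ for every $j$. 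Hence the integral is zero whenever $\alpha\neq\beta$.

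For the diagonal case $\alpha=\beta$, I would appeal to the classical Bargmann--Nelson Gaussian trick: evaluate
\[ I_\alpha:=\int_{\mathbb{C}^n}|z|^{2\alpha}e^{-|z|^2}\,dV(z) \]
in two different ways and compare. On one hand, the integrand factors coordinate-wise, and the one-variable identity $\int_{\mathbb{C}}|w|^{2a}e^{-|w|^2}dA(w)=\pi\,a!$ (obtained via polar coordinates in $\mathbb{R}^2$) yields $I_\alpha=\pi^n\alpha!$. On the other hand, writing $z=r\xi$ with $r\geq 0$, $\xi\in\mathbb{S}^n$, and denoting the unnormalized surface measure by $\tilde\sigma$, the integral separates into a radial and an angular factor, giving $I_\alpha=\tfrac{1}{2}(n-1+|\alpha|)!\int_{\mathbb{S}^n}|\xi|^{2\alpha}\,d\tilde\sigma$. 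Equating the two expressions, and dividing by the total mass $|\tilde\sigma|=2\pi^n/(n-1)!$ to pass to the probability measure $\sigma$, delivers the claimed value $(n-1)!\,\alpha!/(n-1+|\alpha|)!$.

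The genuine obstacle here is not conceptual but bookkeeping: one must keep the normalization of $\sigma$ consistent with Lebesgue measure on $\mathbb{C}^n\cong\mathbb{R}^{2n}$, and in particular secure the total surface area $2\pi^n/(n-1)!$ of $\mathbb{S}^n$. A clean way to sidestep a separate surface-area computation is to first run the same Gaussian argument with $\alpha=0$, which produces this constant as a byproduct and makes the whole derivation self-contained. After that step everything reduces to the elementary identity $\Gamma(k+1)=k!$ and the two cases above combine to give the full formula.
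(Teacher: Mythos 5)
Your proof is correct, and it is essentially the argument the paper itself relies on: the paper offers no proof of Lemma \ref{ldr2} but cites Folland's note and Rudin's Proposition 1.4.9, both of which use exactly the Bargmann--Nelson Gaussian trick (evaluating $\int_{\mathbb{C}^n}z^\alpha\bar z^\beta e^{-|z|^2}\,dV$ coordinate-wise and in polar coordinates) together with torus invariance for the off-diagonal vanishing. Your bookkeeping of the radial factor $\tfrac12(n-1+|\alpha|)!$ and the surface area $2\pi^n/(n-1)!$ checks out, so nothing further is needed.
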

	In order to prove our main theorem, we shall follow the notation used in \cite{bhatia}  for the symmetric tensor product (or power)  of matrices. Recall that, for $k\in\mathbb{N}$ the symmetric tensor product of $\mathbb{C}^n$, denoted by $\vee^k\mathbb{C}^n$ is the subspace of $\otimes^k\mathbb{C}^n$ spanned by all elementary symmetric tensor products $x_1\vee x_2\vee\cdots\vee x_k$ of vectors $x_i\in\mathbb{C}^n$. Given $A\in M_n(\mathbb{C})$, we denote by $\vee^kA$ the $k$th-symmetric tensor power of $A$ i.e. $\vee^kA$ is the matrix in $M_{c_{n,k}}(\mathbb{C})$ representing the linear operator $\vee^kA:\vee^k\mathbb{C}^n\longrightarrow\vee^k\mathbb{C}^n$ whose first definition  on the elementary symmetric tensor product of vectors is given by
	\begin{equation}\label{dr13}
	\vee^kA(x_1\vee x_2\vee\cdots\vee x_k):=Ax_1\vee Ax_2\vee\cdots\vee Ax_k.
	\end{equation}
Recall that the k-th complete elementary symmetric polynomial with n-complex variables is a homogeneous polynomial of order k  given by
$$h_k(z)=\sum_{\substack{|\alpha|=k \\ \alpha\in \mathbb{N}^n_0 }}z^\alpha, \quad z\in\mathbb{C}^n.$$
Here, and in the sequel, if the spectrum $sp(A)=\{\lambda_i\mid i=1,\cdots,n\}$ we shall denote the (un-ordered) n-tuple by $\lambda=\lambda(A):=(\lambda_1,\lambda_2,\cdots,\lambda_n)\in\mathbb{C}^n$ counted with their  multiplicities. Using (\ref{dr13}) it is easy to check that $sp(\vee^kA)=\{\lambda_{i_1}\lambda_{i_2}\cdots\lambda_{i_k}\mid 1\leq i_1\leq i_2\leq\cdots\leq i_k\leq n\}$. This leads to the well-known formula for the trace of $\vee^kA$ in terms of $\lambda_1,\lambda_2,\cdots,\lambda_n$:
		\begin{equation}\label{dr14}
	tr(\vee^kA)=\sum_{1\leq i_1\leq i_2\leq\cdots\leq i_k\leq n}\lambda_{i_1}\lambda_{i_2}\cdots\lambda_{i_k}=h_k(\lambda),
	\end{equation}
Now we are in a position to prove our main result which is Theorem 1.1. For the sake of the reader, we shall restate it again here as follows.
	\begin{theorem}\label{tdr1}
		Let $A\in M_n(\mathbb{C})$ and let $k\in\mathbb{N}$. Denote by $Tr(\vee^kA)=\frac{tr(\vee^kA)}{c_{n,k}}$ the normalized trace of the k-th symmetric power of A,  then
		\begin{equation}\label{dr15}
		\int_{\mathbb{S}^{n}}\big(<A\xi,\xi>\big)^kd\sigma(\xi)=Tr(\vee^kA).
		\end{equation}
	\end{theorem}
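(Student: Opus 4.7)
The plan is to reduce to the upper triangular case via Schur's triangularization and then evaluate the integral term by term using the proposition, the remark, and the two lemmas already established.

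First, I would observe that both sides of the identity are unitarily invariant in $A$. The left side is invariant because $\sigma$ is unitarily invariant on $\mathbb{S}^n$: the substitution $\xi \mapsto U\xi$ turns $\langle A\xi,\xi\rangle$ into $\langle U^*AU\xi,\xi\rangle$. The right side depends only on the eigenvalues of $A$ by (2.8). Therefore, by Schur's triangularization, I may assume $A$ is upper triangular with diagonal entries $\lambda_1,\dots,\lambda_n$ (the eigenvalues of the original matrix).

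Next, I would substitute the expansion from Proposition \ref{pdr1} into the integral and interchange sum and integral to obtain
\begin{equation*}
\int_{\mathbb{S}^{n}}\bigl(\langle A\xi,\xi\rangle\bigr)^k d\sigma(\xi)=\sum_{\substack{|\alpha|=k\\ \alpha\in M_{n^2}(\mathbb{N}_0)}}\binom{k}{\alpha}a^{\alpha}\int_{\mathbb{S}^{n}}\xi^{r(\alpha)}\,\overline{\xi}^{c(\alpha)}\,d\sigma(\xi),
\end{equation*}
where $r(\alpha)$ and $c(\alpha)$ are the row-sum and column-sum tuples appearing in (2.1). By Remark \ref{rdr1}, the upper-triangularity of $A$ cuts the sum down to upper triangular $\alpha$. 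By Lemma \ref{ldr2}, the inner integral vanishes unless $r(\alpha)=c(\alpha)$, i.e., unless the row and column sums of $\alpha$ coincide. Lemma \ref{ldr1} then forces such an $\alpha$ to be diagonal, so only diagonal multi-indices survive.

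For a diagonal $\alpha \in M_{n^2}(\mathbb{N}_0)$ with $|\alpha|=k$, identify it with the tuple $\beta=(\alpha_{11},\dots,\alpha_{nn})\in\mathbb{N}_0^n$ satisfying $|\beta|=k$. Then $a^\alpha=\lambda_1^{\beta_1}\cdots\lambda_n^{\beta_n}=\lambda^{\beta}$, $\binom{k}{\alpha}=\frac{k!}{\beta!}$, and $r(\alpha)=c(\alpha)=\beta$. Applying Lemma \ref{ldr2} to evaluate the integral of $\xi^\beta\overline{\xi}^\beta$, the expression collapses to
\begin{equation*}
\sum_{|\beta|=k}\frac{k!}{\beta!}\,\lambda^{\beta}\cdot\frac{(n-1)!\,\beta!}{(n-1+k)!}=\frac{k!(n-1)!}{(n+k-1)!}\sum_{|\beta|=k}\lambda^{\beta}=\frac{1}{c_{n,k}}\,h_k(\lambda),
\end{equation*}
since $c_{n,k}=\binom{n+k-1}{k}=\frac{(n+k-1)!}{k!(n-1)!}$. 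By (2.8), $h_k(\lambda)=tr(\vee^k A)$, so the right-hand side equals $Tr(\vee^k A)$, as desired.

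The reduction to upper triangular form and the combinatorial pruning via Remark \ref{rdr1} and Lemmas \ref{ldr1}--\ref{ldr2} are the crucial steps; after that, the computation is just bookkeeping of multinomial coefficients. The main conceptual obstacle, which these preparatory results are designed to overcome, is that the multinomial expansion a priori ranges over an enormous set of $\alpha\in M_{n^2}(\mathbb{N}_0)$, and without both the triangularity reduction and the orthogonality argument there is no hope of identifying the diagonal survivors that reproduce $h_k(\lambda)$.
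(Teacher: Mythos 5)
Your proposal is correct and follows essentially the same route as the paper's own proof: Schur triangularization to reduce to the upper triangular case, the multinomial expansion of Proposition \ref{pdr1}, pruning to upper triangular $\alpha$ via Remark \ref{rdr1}, orthogonality from Lemma \ref{ldr2} forcing equal row and column sums, Lemma \ref{ldr1} forcing $\alpha$ diagonal, and the final multinomial bookkeeping yielding $h_k(\lambda)/c_{n,k}$. No gaps.
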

	\begin{proof}
		Using Schur's decomposition, one can write $A=U^\star TU$ with $T$ upper triangular and $U$ is unitary.
		So that the change of variable $u=U\xi$ provides
		$$\int_{\mathbb{S}^{n}}\big(<A\xi,\xi>\big)^kd\sigma(\xi)=\int_{\mathbb{S}^{n}}\big(<TU\xi,U\xi>\big)^kd\sigma(U^\star u)=\int_{\mathbb{S}^{n}}\big(<Tu,u>\big)^kd\sigma(u),$$
		where the last equality follows from the invariance of $\sigma$  under unitary transformation. Thus, we may assume that $A$ is upper triangular and $\lambda=(a_{11},a_{22},\cdots,a_{nn})$.
		By Proposition \ref{pdr1} and Remark \ref{rdr1}, we obtain that
		$$\int_{\mathbb{S}^{n}}\big(<A\xi,\xi>\big)^kd\sigma(\xi)$$
		is given by $$\sum_{\substack{|\alpha|=k \\ \alpha\in M_{n^2}(\mathbb{N}_0) \\\alpha \text{ upper triangular} }}\binom{k}{\alpha}a^\alpha\int_{\mathbb{S}^{n}} \xi^{\big(\sum_i\alpha_{i1},\sum_i\alpha_{i2},\cdots,\sum_i\alpha_{in}\big)} \ \overline{\xi}^{\big(\sum_j\alpha_{1j},\sum_j\alpha_{2j},\cdots,\sum_j\alpha_{nj}\big)}d\sigma(\xi).$$
		Following Lemma \ref{ldr2}, the preceding integral terms vanishes for all $\alpha$ except when 	$$\sum_{i=1}^n\alpha_{il}=\sum_{j=1}^n\alpha_{lj}, \ \ \ \mbox{ for every  }  l=1,2\cdots,n.$$
		By Lemma \ref{ldr1}, $\alpha$ is a diagonal matrix so that
		$$a^\alpha= \lambda_1^{\alpha_{11}}\lambda_2^{\alpha_{22}}\cdots\lambda_2^{\alpha_{nn}}=\lambda^\alpha,$$
		where in the last equality we considered   $\alpha=(\alpha_1,\alpha_2,\cdots,\alpha_n)=(\alpha_{11},\alpha_{22},\cdots,\alpha_{nn})$ as an $n$-tuple in $\mathbb{N}^n_0$. This reduces the preceding integral to the following form
		
		$$\int_{\mathbb{S}^{n}}\big(<A\xi,\xi>\big)^kd\sigma(u)=\sum_{\substack{|\alpha|=k \\ \alpha\in \mathbb{N}^n }}\binom{k}{\alpha}\lambda^\alpha\int_{\mathbb{S}^{n}} \xi^{\big(\alpha_{11},\alpha_{22},\cdots,\alpha_{nn}\big)} \ \overline{\xi}^{(\alpha_{11},\alpha_{22},\cdots,\alpha_{nn}\big)}d\sigma(\xi).$$
		Another application of Lemma \ref{ldr2} for the norms of monomials, yields
		\begin{align}
		\int_{\mathbb{S}^{n}}\big(<A\xi,\xi>\big)^kd\sigma(\xi)&=\sum_{\substack{|\alpha|=k \\ \alpha\in \mathbb{N}^n_0 }}\binom{k}{\alpha}\lambda^\alpha\dfrac{(n-1)!\alpha!}{(n-1+\lvert \alpha\rvert)!}\notag\\
		&=\sum_{\substack{|\alpha|=k \\ \alpha\in \mathbb{N}^n_0 }}\frac{k!}{\alpha !}	\lambda^\alpha\dfrac{(n-1)!\alpha!}{(n-1+k)!}\notag\\
		&=\frac{k!(n-1)!}{(n-1+k)!}\sum_{\substack{|\alpha|=k \\ \alpha\in \mathbb{N}^n_0}} 	\lambda^\alpha\label{pq}\\
		&=\frac{tr(\vee^kA)}{c_{n,k}}.\notag
		\end{align}

	\end{proof}
	Formula (\ref{dr15}) merits (at least a quick) discussion on its important role in the analysis of function spaces. It is trivial that (\ref{dr15}) generalizes (\ref{dr1}) from $k=1$ to arbitrary $k\in\mathbb{N}$. With this in mind, a direct application of ((\ref{dr1})) to $\vee^kA\in M_{c_{n,k}}(\mathbb{C})$ together with (\ref{dr15}) provides
	\begin{equation}\label{dr16}
	\int_{\mathbb{S}^{c_{n,k}}}<\vee^kA\mu,\mu>d\sigma(\mu)=\int_{\mathbb{S}^{n}}\big(<A\xi,\xi>\big)^kd\sigma(\xi).
	\end{equation}
	The above formula can be viewed as a reduction from integrals over $\mathbb{S}^{c_{n,k}}$ to integrals over $\mathbb{S}^n$. This leads to various applications in the study of the analysis of reproducing kernel Hilbert spaces of analytic function over specific domains. For example, the left hand side of (\ref{dr16}) is the Szeg\"{o} projection onto $H^2(\mathbb{S}^{c_{n,k}})$ of $$\mathbb{S}^{c_{n,k}}\ni\mu\longrightarrow<\vee^kA\mu,\mu>$$at $z=0\in\mathbb{C}^{c_{n,k}}$,  while the right hand side  is the Szeg\"{o} projection onto $H^2(\mathbb{S}^n)$ of $$\mathbb{S}^{n}\ni\xi\longrightarrow \big(<A\xi,\xi>\big)^k$$ at $z=0\in\mathbb{C}^n.$
Another application is connected to the study of the composition  of  Toeplitz operators on the Segal-Bargmann space. We refer the reader to \cite{agbor,bauer1,coburn,Issa1} where integral reduction was frequently used to obtain a composition formula for some classes of Toeplitz operators.
	
	
	Notice that Formula (\ref{dr15})  shows that the integral  depends on the spectrum of the matrix.  However,  by expressing the complete symmetric polynomials  as  power sums we obtain that the integral depends  only on moments of the eigenvalues.  For this reason, we present the following well-known result on representing the complete symmetric polynomials in terms of power sums.
	\begin{lemma}[\cite{mcd} pp. 24-25]\label{ldr3}
		Let $k\in\mathbb{N}$ and let $h_k(x)=\sum_{\substack{|\alpha|=k \\ \alpha\in \mathbb{N}^n }}x^{\alpha}$. For each $r\in\mathbb N$ put $p_r(x)=\sum_{i=1}^n x_i^r.$ Given a non-zero compactly supported sequence arranged in decreasing  order $\beta=(\beta_1,\beta_2,\cdots,\beta_l,0,0,0\cdots)\in\mathbb{N}^\infty$, define
		$$p_\beta(x)=p_{\beta_1}(x)p_{\beta_2}(x)\cdots p_{\beta_l}(x),$$
		with $\beta_l$ is the leading entry of $\beta$. Then,
		\begin{equation}\label{dr17}
		h_k(x)=\sum_{|\beta|=k}z^{-1}_\beta p_\beta(x),\end{equation}
		where  \begin{equation}\label{ldr18}
		z_\beta=\prod_{i\geq1} i^{m_i}m_i!
		\end{equation} and $m_i$ is the number for which $i$ is occurring in $\beta$.
	\end{lemma}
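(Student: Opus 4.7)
The plan is to derive the identity via the standard generating-function argument. Let $H(t) = \sum_{k \geq 0} h_k(x) t^k$ be the formal generating series for the complete homogeneous symmetric polynomials. Expanding each factor as a geometric series yields the well-known product formula $H(t) = \prod_{i=1}^{n} (1 - x_i t)^{-1}$, valid as a formal power series in $t$ over the polynomial ring in $x_1, \ldots, x_n$. This is the key starting identity that connects $\{h_k\}$ to a quantity whose logarithm is easy to expand in terms of the power sums $p_r(x) = \sum_i x_i^r$.

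Next, I would take the formal logarithm of both sides and expand using $-\log(1-u) = \sum_{r \geq 1} u^r/r$. This gives
\begin{equation*}
\log H(t) \;=\; -\sum_{i=1}^{n} \log(1 - x_i t) \;=\; \sum_{r \geq 1} \frac{t^r}{r} \sum_{i=1}^{n} x_i^r \;=\; \sum_{r \geq 1} \frac{p_r(x)}{r}\, t^r.
\end{equation*}
Exponentiating term by term then converts this into a product of exponentials:
\begin{equation*}
H(t) \;=\; \prod_{r \geq 1} \exp\!\left(\tfrac{p_r(x)}{r}\, t^r\right) \;=\; \prod_{r \geq 1} \sum_{m_r \geq 0} \frac{1}{m_r!} \left(\frac{p_r(x)}{r}\right)^{m_r} t^{r m_r}.
\end{equation*}

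Finally I would extract the coefficient of $t^k$ on both sides. On the left one obtains $h_k(x)$ by definition. On the right, the contributing terms are indexed by tuples $(m_1, m_2, \ldots)$ of nonnegative integers with finite support and $\sum_{r \geq 1} r m_r = k$, each such tuple being precisely the multiplicity sequence of a partition $\beta$ of $k$ (with $m_i$ parts equal to $i$, listed in decreasing order as required in the statement). Substituting $p_\beta(x) = \prod_r p_r(x)^{m_r}$ and noting that the accumulated denominator is exactly $\prod_r r^{m_r} m_r! = z_\beta$ as defined in (\ref{ldr18}), the coefficient of $t^k$ becomes $\sum_{|\beta|=k} z_\beta^{-1} p_\beta(x)$, which is (\ref{dr17}).

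The only real bookkeeping step, and therefore the main (mild) obstacle, is making the bijection between multiplicity sequences $(m_r)_{r \geq 1}$ with $\sum r m_r = k$ and partitions $\beta$ with $|\beta| = k$ fully explicit, and verifying that the factor $z_\beta$ from (\ref{ldr18}) coincides term-for-term with $\prod_r r^{m_r} m_r!$ produced by the expansion of the exponential product. No analytic issues intervene, since every manipulation is carried out in the ring of formal power series.
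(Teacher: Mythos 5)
Your proof is correct and is precisely the standard generating-function argument ($H(t)=\prod_i(1-x_it)^{-1}$, take the formal logarithm, exponentiate, extract the coefficient of $t^k$) found in Macdonald, pp.~24--25, which is exactly the source the paper cites; the paper itself gives no proof of this lemma, so your argument coincides with the intended one. The bookkeeping you flag --- identifying multiplicity sequences $(m_r)$ with $\sum_r r m_r = k$ with partitions $\beta$ of $k$ and matching $\prod_r r^{m_r}m_r!$ with $z_\beta$ --- is handled correctly.
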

	We shall apply the preceding lemma to express the trace of the symmetric tensor product of a matrix $A$ in-terms of product of traces of powers of $A$.  Following the notation in the above lemma, Formula (\ref{dr15}) can then be reformulated in-terms of the $rth$-moments of the eigenvalues $p_r(\lambda)=\sum_{i=1}^n\lambda_i^r$ as follows:
	\begin{equation}\label{dr19}
	c_{n,k}\int_{\mathbb{S}^{n}}\big(<A\xi,\xi>\big)^kd\sigma(\xi)=\sum_{\substack{|\beta|=k \\ \beta_1\geq\beta_2\geq\cdots\geq\beta_k\geq0}}z^{-1}_\beta p_\beta(\lambda)=\sum_{\substack{|\beta|=k \\ \beta_1\geq\beta_2\geq\cdots\geq\beta_k\geq0}}z^{-1}_\beta\prod_{t=1}^l tr(A^{\beta_t}).
	\end{equation}
	Notice that the condition for which $\beta\in \mathbb{N}^k_0$ is trivial since $\beta$ is decreasing with $|\beta|=k$. Using the basic fact that the product of traces is the trace of tensor products, Eq. (\ref{dr19}) can be rewritten as follows
	\begin{equation}\label{dr20}
	c_{n,k}\int_{\mathbb{S}^{n}}\big(<A\xi,\xi>\big)^kd\sigma(\xi)=\sum_{\substack{|\beta|=k \\ \beta_1\geq\beta_2\geq\cdots\geq\beta_k\geq0}}z^{-1}_\beta tr\Big[\bigotimes_{t=1}^lA^{\beta_t}\Big],
	\end{equation}
	where $\bigotimes$ denotes the tensor (Kronecker) product of matrices.
	
	With an inspection of equations (\ref{dr19}) and (\ref{dr20}) one obtains an "exhaustive behavior" of the $l$-index of $\beta$. Indeed, in both equations there is no-need to worry about $\beta_i$ whenever $i>l$ so fixing $l$ will be helpful in computation. Moreover,  regarding (\ref{dr20}) one might be interested in summing up tensor powers of matrices (of same dimension). This also amounts to collecting the multi-indices having the same index in the  leading term. For this reason, we consider
	$$S^k:=\{\beta=(\beta_1,\beta_2,\cdots,\beta_k)\in\mathbb{N}_0^k\mid \beta_1\geq\beta_2\geq\cdots\geq\beta_k \mbox{ and } |\beta|=k\}$$
	and for each $l=1,2,\cdots,k$ we set $$S^k_l=\{\beta\in S^k\mid \beta_l\neq0 \mbox{ and } \beta_i=0  \mbox{ for all } i>l\}.$$
	It is worthy to mention here that $\{S^k_l\}_{l=1,2,\cdots,k}$ forms a partition for $S^k$ and thus we conclude from the previous discussion the following new formulation of Theorem \ref{tdr1}.
	\begin{corollary}\label{cdr1}
		Let $A\in M_n(\mathbb{C})$ and let $k\in\mathbb{N}$ then the following holds
		\begin{align}
		c_{n,k}\int_{\mathbb{S}^{n}}\big(<A\xi,\xi>\big)^kd\sigma(\xi)&=\sum_{l=1}^k\sum_{\beta\in S^k_l}\frac{1}{z_\beta}\prod_{t=1}^l tr(A^{\beta_t})\label{dr21}\\
		&= \sum_{l=1}^ktr\left[ \Big(\sum_{\beta\in S^k_l} \frac{1}{z_\beta}\bigotimes_{t=1}^lA^{\beta_t}\Big)\right]\label{dr22}.
		\end{align}
	\end{corollary}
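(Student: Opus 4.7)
The plan is to derive Corollary \ref{cdr1} as a direct consequence of Theorem \ref{tdr1} combined with the Newton-type identity from Lemma \ref{ldr3}, with only a small amount of bookkeeping in the indexing. First, by Theorem \ref{tdr1} together with the formula (\ref{dr14}) for $tr(\vee^k A)$, the left-hand side satisfies
\[
c_{n,k}\int_{\mathbb{S}^n}\bigl(\langle A\xi,\xi\rangle\bigr)^k\,d\sigma(\xi) = tr(\vee^k A) = h_k(\lambda),
\]
where $\lambda=(\lambda_1,\ldots,\lambda_n)$ is the unordered tuple of eigenvalues of $A$, counted with multiplicities.

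Next, I would apply Lemma \ref{ldr3} to expand $h_k(\lambda)=\sum_{|\beta|=k} z_\beta^{-1}\,p_\beta(\lambda)$, where $\beta$ ranges over compactly supported weakly decreasing sequences of non-negative integers with $|\beta|=k$. The key elementary observation is that, since the eigenvalues of $A^r$ are $\lambda_1^r,\ldots,\lambda_n^r$, one has $p_r(\lambda)=\sum_{i=1}^n\lambda_i^r=tr(A^r)$, and hence $p_\beta(\lambda)=\prod_{t=1}^{\ell} tr(A^{\beta_t})$, where $\ell$ is the number of non-zero entries of $\beta$. This already yields (\ref{dr19}) once one repackages the indexing set: the weakly decreasing $k$-tuples with $|\beta|=k$ are exactly the elements of $S^k$, and the decomposition $S^k=\bigsqcup_{l=1}^k S^k_l$ (by the position of the last non-zero entry) reorganizes the sum as in (\ref{dr21}).

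To pass from (\ref{dr21}) to (\ref{dr22}) I would invoke the standard multiplicativity of the trace under the Kronecker product: for any square matrices $M_1,\ldots,M_\ell$ one has $\prod_{t=1}^\ell tr(M_t)=tr\bigl(\bigotimes_{t=1}^\ell M_t\bigr)$. Taking $M_t=A^{\beta_t}$ for $\beta\in S^k_l$ and using linearity of the trace to pull the sum over $\beta\in S^k_l$ inside the trace produces the compact form (\ref{dr22}).

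I do not expect any genuine difficulty here; the statement is essentially a reformulation of Theorem \ref{tdr1} through the power-sum expansion of the complete symmetric polynomial. The only point requiring mild care is the convention that the tuple $\beta$ is effectively truncated at its last non-zero entry, so that the product $\prod_{t=1}^{\ell} tr(A^{\beta_t})$ (respectively the Kronecker product $\bigotimes_{t=1}^\ell A^{\beta_t}$) never picks up stray factors coming from trailing zeros. Once this bookkeeping matches the definition of the partition $\{S^k_l\}_{l=1}^k$ of $S^k$, both equalities in the corollary follow immediately.
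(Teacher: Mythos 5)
Your proposal is correct and follows essentially the same route as the paper: Theorem \ref{tdr1} combined with the power-sum expansion of $h_k$ from Lemma \ref{ldr3}, the identification $p_r(\lambda)=tr(A^r)$, the partition $S^k=\bigcup_{l=1}^k S^k_l$ by the position of the last non-zero entry, and multiplicativity of the trace under the Kronecker product to pass to (\ref{dr22}). No gaps; the bookkeeping point you flag about truncating $\beta$ at its last non-zero entry is exactly how the paper handles it.
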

	The next example is devoted to finding the explicit value of the integral formula for the cases $k=2,\cdots 6$.  Since  going from (\ref{dr21}) to (\ref{dr22}) is  direct, we shall only apply (\ref{dr22})  for the cases $k=2$ and $k=3$.
	\begin{example}\label{edr1}
		For a given	 $k,n\in\mathbb{N}$ put $c_{n,k}=\binom{n+k-1}{k}$ then for any  $A\in M_n(\mathbb{C})$ the following formulas hold
		\begin{fleqn}[\parindent]
			\begin{equation}\label{ed14}
			\begin{split}c_{n,2}\int_{\mathbb{S}^{n}}\big(<A\xi,\xi>\big)^2d\sigma(\xi)=\frac{1}{2}tr(A^2)+\frac{1}{2}tr(A\otimes A)=\frac{1}{2}tr(A^2)+\frac{1}{2}(tr A)^2\end{split}
			\end{equation}
			\begin{equation}\label{ed15}
			\begin{split}
			c_{n,3}\int_{\mathbb{S}^{n}}\big(<A\xi,\xi>\big)^3d\sigma(\xi)&=\frac{1}{3}tr(A^3)+\frac{1}{2}tr(A^2\otimes A)+\frac{1}{6}tr(A\otimes A\otimes A)\\
			&=\frac{1}{3}tr(A^3)+\frac{1}{2}[tr(A^2)]tr(A)+\frac{1}{6}[tr A]^3.
			\end{split}
			\end{equation}
			\begin{equation}\label{ed16}
			c_{n,4}\int_{\mathbb{S}^{n}}\big(<A\xi,\xi>\big)^4d\sigma(\xi)
			=\frac{1}{4}tr(A^4)+\frac{1}{3}tr(A^3)tr(A)+\frac{1}{8}[tr (A^2)]^2+\frac{1}{4}tr (A^2)[tr(A)]^2+\frac{1}{24}[tr (A)]^4.
			\end{equation}
			\begin{equation}\label{ed17}
			\begin{split}c_{n,5}\int_{\mathbb{S}^{n}}\big(<A\xi,\xi>\big)^5d\sigma(\xi)
			=&\frac{1}{5}tr(A^5)+\frac{1}{4}[tr(A^4)]tr(A)+\frac{1}{6}[tr (A^3)][tr (A^2)]+\frac{1}{6}[tr (A^3)][tr (A)]^2\\
			&+\frac{1}{8}[tr (A^2)]^2(tr A)+\frac{1}{12}tr (A^2)[tr (A)]^3+\frac{1}{120}[tr (A)]^5.
			\end{split}
			\end{equation}
			\begin{equation}\label{ed18}
			\begin{split}c_{n,6}\int_{\mathbb{S}^{n}}\big(<A\xi,\xi>\big)^6d\sigma(\xi)=&\frac{1}{6}tr(A^6)+\frac{1}{5}[tr(A^5)]tr(A)+\frac{1}{8}[tr (A^4)][tr (A^2)]+\frac{1}{18}[tr (A^3)]^2\\
			&+\frac{1}{8}[tr (A^4)][tr (A)]^2+\frac{1}{6}[tr (A^3)][tr (A^2)][tr(A)]+\frac{1}{48}[tr (A^2)]^3\\
			&+\frac{1}{18}[tr (A^3)][tr (A)]^3+\frac{1}{16}[tr (A^2)]^2[tr (A)]^2\\
			&+\frac{1}{48}[tr (A^2)][tr (A)]^4+\frac{1}{720}[tr (A)]^6.
			\end{split}
			\end{equation}
		\end{fleqn}
	\end{example}
	We conclude this  section by noting that Eq. (2.15) can be obtained from Corollary 2.3 and Theorem 2.4 in \cite{hol}, while to the best of our knowledge Equations (2.16)-(2.19) are new.
	\section{A new proof of MacMahon Master Theorem}
	This section is devoted to provide a simple proof for MacMahon Master theory. This theory has been central in combinatorics and in the theory of angular momentum of systems of particles as well. The original proof is due to P. A. MacMahon \cite{mcmhn} while another proof depending on complex analysis was provided by I. J. Good in \cite{good}. Our proof seems to be the simplest.
 Making use of the results in the previous section, let us first present an integral representation of  a particular determinant as follows.
	\begin{corollary}\label{cec1}
		Let $B\in M_n(\mathbb{C})$ with $\lVert B\rVert_2=\sup_{\lVert x\rVert=1}\lVert Bx\rVert_2<1$ then
		\begin{equation}\label{ghj}
	\int_{\mathbb{S}^{n}}\dfrac{d\sigma(\xi)}{\big(<(I_n-B)\xi,\xi>\big)^n}=\det (I_n-B)^{-1}.
		\end{equation}
	\end{corollary}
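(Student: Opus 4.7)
The plan is to expand the integrand as a convergent power series in the numerical values of $B$, integrate term by term using Theorem \ref{tdr1}, and then recognize the resulting series as the generating function for the complete homogeneous symmetric polynomials evaluated at the eigenvalues of $B$.

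First I would write $\langle (I_n-B)\xi,\xi\rangle = 1 - \langle B\xi,\xi\rangle$ and use the binomial (or negative binomial) expansion
\begin{equation*}
\frac{1}{(1-w)^n} = \sum_{k=0}^\infty c_{n,k}\, w^k, \qquad |w|<1,
\end{equation*}
which is valid pointwise on $\mathbb{S}^n$ with $w=\langle B\xi,\xi\rangle$, because Cauchy--Schwarz combined with the hypothesis $\lVert B\rVert_2<1$ gives the uniform estimate $|\langle B\xi,\xi\rangle|\leq \lVert B\rVert_2 <1$ for every $\xi\in\mathbb{S}^n$. This uniform bound makes the series uniformly convergent on $\mathbb{S}^n$, which justifies interchanging the sum and the integral.

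Next I would apply Theorem \ref{tdr1} to each term, obtaining
\begin{equation*}
\int_{\mathbb{S}^n}\frac{d\sigma(\xi)}{\big(\langle (I_n-B)\xi,\xi\rangle\big)^n} = \sum_{k=0}^\infty c_{n,k}\int_{\mathbb{S}^n}\big(\langle B\xi,\xi\rangle\big)^k d\sigma(\xi) = \sum_{k=0}^\infty c_{n,k}\cdot \frac{tr(\vee^k B)}{c_{n,k}} = \sum_{k=0}^\infty h_k(\lambda(B)),
\end{equation*}
using formula (\ref{dr14}) for $tr(\vee^k B)$ in terms of the eigenvalues of $B$. Since $|\lambda_i(B)|\leq \lVert B\rVert_2<1$ for each $i$, the classical generating function identity
\begin{equation*}
\sum_{k=0}^\infty h_k(\lambda_1,\dots,\lambda_n) = \prod_{i=1}^n \frac{1}{1-\lambda_i} = \frac{1}{\det(I_n-B)}
\end{equation*}
applies, which completes the proof.

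The only real subtlety is the interchange of summation and integration; but as indicated above, the hypothesis $\lVert B\rVert_2<1$ provides uniform convergence on the compact set $\mathbb{S}^n$, so this step is immediate and there is no genuine obstacle. Everything else is a direct combination of Theorem \ref{tdr1} with the standard generating function identity for the complete symmetric polynomials.
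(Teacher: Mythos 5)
Your proposal is correct and follows essentially the same route as the paper: expand $\big(1-\langle B\xi,\xi\rangle\big)^{-n}$ via the negative binomial series, interchange sum and integral (the paper invokes dominated convergence with the bound $(1-\lVert B\rVert_2)^{-n}$, you invoke uniform convergence from $|\langle B\xi,\xi\rangle|\leq\lVert B\rVert_2<1$ — both are valid), apply Theorem \ref{tdr1} termwise, and sum the generating function $\sum_k h_k(\lambda)=\prod_i(1-\lambda_i)^{-1}=\det(I_n-B)^{-1}$. No gaps.
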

	\begin{proof}
		By Theorem \ref{tdr1}, we know that for each $k\in\mathbb{N}$ the following identity holds
		$$\int_{\mathbb{S}^{n}}  \binom{n+k-1}{k}\big(<B\xi,\xi>\big)^kd\sigma(\xi)=\sum_{\substack{|\alpha|=k \\ \alpha\in \mathbb{N}^n }}\lambda^\alpha.$$
		 Since 	$\lVert B\rVert_2<1$ the series  $\sum_{k=0}^\infty\binom{n+k-1}{k}\big(<B\xi,\xi>\big)^k$ converges absolutely and is dominated by $\dfrac{1}{(1-\lVert B\rVert_2)^n}$.   Hence by Lebesgue dominated convergence theorem we get
		$$\int_{\mathbb{S}^{n}} \sum_{k=0}^\infty\binom{n+k-1}{k}\big(<B\xi,\xi>\big)^k=\sum_{k=0}^\infty\sum_{\substack{|\alpha|=k \\ \alpha\in \mathbb{N}^n }}\lambda^\alpha$$
		i.e. \begin{equation}\label{ggj}\int_{\mathbb{S}^{n}} \big(1-<B\xi,\xi>\big)^{-n}d\sigma(\xi)=\sum_{\alpha\in \mathbb{N}^n}\lambda^\alpha=\prod_{j=1}^{n}\dfrac{1}{1-\lambda_j}=\dfrac{1}{\det(I_n-B)}.\end{equation}
			\end{proof}
	\begin{remark}
Note that 	the monotone convergence theorem ensures that Eq. (\ref{ghj}) remains true for any positive semidefinite matrix B. Also, it is easy to see that for any	$c\in\mathbb{C}$ with $|c|>1$ and for any non-zero matrix  $A\in M_n(\mathbb{C})$,  we have
		$$\int_{\mathbb{S}^{n}}\dfrac{d\sigma(\xi)}{\big(<(c\lVert A\rVert_2I_n-A)\xi,\xi>\big)^n}=\det (c\lVert A\rVert_2I_n-A)^{-1}.$$
		\end{remark}
	
Now, we are ready to present our proof for MacMahon Master theorem.
\begin{theorem}[MacMahon]
Let $A=(a_{ij})\in M_n(\mathbb{C})$, $\alpha\in\mathbb{N}^n$ and $x=(x_1,x_2,\cdots,x_n)^T$ be a formal variable. Then the coefficient of $x^\alpha in the expression of \big(Ax\big)^\alpha$ is equal to the coefficient of $x^\alpha$ in the expansion of $\det(I_n-\Delta(x)B)^{-1}$, where $\Delta(x)$ is the diagonal matrix given by $\Delta(x)=\texttt{diag}(x_1,_2,\cdots,x_n)$.
\end{theorem}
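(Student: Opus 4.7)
The plan is to apply Corollary \ref{cec1} to the matrix $B = \Delta(x)A$, which requires $\lVert \Delta(x)A\rVert_2 < 1$. This holds for $x$ in a sufficiently small neighborhood of the origin in $\mathbb{C}^n$, and since both sides of the asserted identity are analytic functions of $x_1,\ldots,x_n$ there, it suffices to compare their Taylor coefficients at $x^\alpha$. Corollary \ref{cec1} yields
$$\det(I_n - \Delta(x)A)^{-1} = \int_{\mathbb{S}^{n}} \bigl(1 - \langle \Delta(x)A\xi,\xi\rangle\bigr)^{-n} d\sigma(\xi) = \sum_{k=0}^\infty \binom{n+k-1}{k} \int_{\mathbb{S}^{n}} \langle \Delta(x)A\xi,\xi\rangle^{k}\, d\sigma(\xi),$$
where the series/integral interchange is justified by dominated convergence exactly as in the proof of Corollary \ref{cec1}.

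Next I would apply Proposition \ref{pdr1} to the matrix $\Delta(x)A$, whose $(i,j)$ entry is $x_i a_{ij}$. Writing $r(\beta) = (\sum_j \beta_{ij})_i$ and $c(\beta) = (\sum_i \beta_{ij})_j$ for the row- and column-sum vectors of $\beta \in M_{n^2}(\mathbb{N}_0)$, the factor $\prod_{i,j}(x_i a_{ij})^{\beta_{ij}}$ collapses to $x^{r(\beta)} a^{\beta}$. Integrating term-by-term, Lemma \ref{ldr2} kills every $\beta$ with $r(\beta) \neq c(\beta)$, and for the surviving $\beta$ contributes the factor $(n-1)!\,\alpha'!/(n+k-1)!$ with $\alpha' = r(\beta) = c(\beta)$. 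Extracting the coefficient of $x^\alpha$ forces $r(\beta) = \alpha$, and hence $c(\beta) = \alpha$ as well; after cancellation of $\binom{n+k-1}{k}$ against $(n-1)!/(n+k-1)!$ the result is
$$[x^\alpha]\det(I_n - \Delta(x)A)^{-1} = \alpha! \sum_{\substack{\beta \in M_{n^2}(\mathbb{N}_0) \\ r(\beta) = c(\beta) = \alpha}} \frac{a^{\beta}}{\beta!}.$$

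Finally I would expand $(Ax)^\alpha = \prod_i \bigl(\sum_j a_{ij}x_j\bigr)^{\alpha_i}$ directly by the multinomial theorem, obtaining $\sum_\gamma \frac{\alpha!}{\gamma!}\,a^{\gamma}\, x^{c(\gamma)}$, summed over $\gamma \in M_{n^2}(\mathbb{N}_0)$ with row sums equal to $\alpha$. The coefficient of $x^\alpha$ then selects exactly those $\gamma$ whose column sums are also $\alpha$, and matches the expression above term by term. The main obstacle is the careful bookkeeping of row versus column sums when invoking Proposition \ref{pdr1} for $\Delta(x)A$ (since the $x_i$'s attach to rows while the monomial $x^\alpha$ is picked up from the row-sum vector), together with verifying that the multinomial and binomial factors collapse cleanly; no analytic difficulty enters beyond that already handled in Corollary \ref{cec1}.
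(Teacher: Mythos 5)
Your proposal is correct and follows essentially the same route as the paper: both rest on Corollary \ref{cec1} applied to $\Delta(x)A$ together with the orthogonality relations of Lemma \ref{ldr2}. The only difference is bookkeeping --- you expand fully via Proposition \ref{pdr1} and compute the coefficient of $x^\alpha$ on both sides explicitly as $\alpha!\sum_{\beta}a^\beta/\beta!$ over matrices $\beta$ with row and column sums equal to $\alpha$, whereas the paper keeps the coefficient $c_\beta$ of $x^\beta$ in $(Ax)^\beta$ abstract and shows the integral reproduces $\sum_\beta c_\beta x^\beta$ directly.
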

\begin{proof}
 For$k\in\mathbb{N}$ and for each $\beta\in\mathbb{N}^n$ with $|\beta|=k$, denote by $c_\beta$  the coefficient of $x^\beta$ in $\big(Ax\big)^\beta$. Since $\lambda\big(\Delta(x)A\big)=\lambda\big(A\Delta(x)\big)$ then by (\ref{pq}) we have
\begin{align*}
	h_k\Big(\lambda\big(\Delta(x)A\big)\Big)&=c_{n,k}\int_{\mathbb{S}^{n}}\big(<A\Delta(x)\xi,\xi>\big)^kd\sigma(\xi)\\
	 &=c_{n,k}\int_{\mathbb{S}^{n}}\bigg[\Big<\big(\sum_{j=1}^na_{1j}x_j\xi_j,\sum_{j=1}^na_{2j}x_j\xi_j,\cdots,\sum_{j=1}^na_{nj}x_j\xi_j\big)^T,\xi\Big>\bigg]^kd\sigma(\xi)
\end{align*}
Applying again the multinomial theorem, we get
\begin{align}\label{ppopo}
h_k\Big(\lambda\big(\Delta(x)A\big)\Big)&=c_{n,k}\sum_{\substack{|\beta|=k \\ \beta\in \mathbb{N}^n }}\binom{k}{\beta}\int_{\mathbb{S}^{n}}c_\beta x^\beta\xi^\beta\overline{\xi}^\beta d\sigma(\xi)=c_{n,k}\sum_{\substack{|\beta|=k \\ \beta\in \mathbb{N}^n }}\binom{k}{\beta}\dfrac{(n-1)!\beta!}{(n-1+\lvert \beta\rvert)!}\\
&=\dfrac{(n+k-1)!}{(n-1)!k!}	\sum_{\substack{|\beta|=k \\ \beta\in \mathbb{N}^n }}\frac{k!}{\beta!}c_\beta x^\beta\dfrac{(n-1)!\beta!}{(n-1+k)!}=\sum_{\substack{|\beta|=k \\ \beta\in \mathbb{N}^n }}c_\beta x^\beta,
\end{align}
where the second equality in (\ref{ppopo}) follows from (\ref{1e16}). Finally, taking the sum over  $k\in\mathbb{N}$ and using (\ref{ggj}) we obtain
$$\det(I_n-\Delta(x)B)^{-1}=\sum_{\alpha\in \mathbb{N}^n}\lambda^\alpha\Big((\Delta(x)A\Big)=\sum_{k=0}^\infty h_k\Big(\lambda\big(\Delta(x)A\big)\Big)=\sum_{\beta\in \mathbb{N}^n}c_\beta x^\beta.$$
	\end{proof}
	\section{On the monotonicity of complete symmetric polynomials}
	In this section, we extend the definition of k-homogeneous complete symmetric polynomials where $k\in\mathbb{N}$ to arbitrary $p\in\mathbb{R}^+$.  We then exploit the definition to introduce a natural extension for the term-normalized homogeneous polynomial. Using the theory of majorization,  we obtain a Schur convexity of this extension. As a consequence , we obtain a  generalization of Theorem 7.3 in \cite{cuttler} and  we answer an open problem raised in \cite{roventa} as well.

Throughout this section, we shall follow the standard notation used  in the study of symmetric polynomials and the theory of majorization. Thus, for   $m$-tuples $\lambda, \mu\in\mathbb{R}^m_+$ we say that $\lambda$ is majorized by $\mu$ and we will write   $\lambda\preceq\mu$ if $$\sum^k_{i=1} \lambda_{[i]}\leq\sum^j_{i=1} \mu_{[i]}, \  j=1\cdots n-1\quad\mbox{ and }\quad|\lambda|=|\mu|,$$
where $x_{[i]}$ is the $i$he component obtained from $x=(x_1,\cdots,x_n)$ after arranging its components  in decreasing order.
In the case where $\mu\in\mathbb{R}^n_+$ and $n<m$, then   we shall complete the components of $\mu$ to an $m$-tuple by adding zeros so that the preceding notion is still well defined. If $\lambda$ and $\mu$ are partitions of (possibly different) natural integers then we shall write $$\lambda\sqsubseteq\mu\quad \mbox{if}\quad\frac{\lambda}{|\lambda|}\preceq\frac{\mu}{|\mu|},$$
For each $k\in\mathbb{N}$, we shall denote by  $H_k$ to be the normalized complete symmetric polynomial on $\mathbb{C}^n$ i.e.
\begin{equation}\label{xfxf1}
H_k(z)=\frac{h_k(z)}{c_{n,k}}.
\end{equation}
Given $\lambda=(\lambda_1,\cdots,\lambda_m)\in\mathbb{N}^m$, the term-normalized homogeneous complete symmetric function is defined on $\mathbb{C}^n$ by
\begin{equation}\label{xfxf2}
H_\lambda(z)=\prod_{i-1}^mH_{\lambda_i}(z).
\end{equation}
In addition, for $x\in\mathbb{R}^n_+$ and for the map defined by
\begin{equation}\label{xfxf-1}
\mathfrak{H}_\lambda(x):=\sqrt[|\lambda|]{H_\lambda(x)},
\end{equation}
the authors in \cite{cuttler} proved the monotonicity of $\mathfrak{H}_\lambda$ on $\mathbb{R}^n_+$ as follows.
\begin{theorem}[Theorem 7.3,  \cite{cuttler}]
	Given integer partitions $\lambda$ and $\mu$ with $\lambda\sqsubseteq\mu$,  then for any $x\in\mathbb{R}^n_+$ we have
	\begin{equation}\label{xfxf3}
	\mathfrak{H}_\lambda(x)\leq\mathfrak{H}_\mu(x).
	\end{equation}
	\end{theorem}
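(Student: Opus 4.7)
The plan is to derive the inequality from Karamata's majorization principle, with the required convexity hypothesis supplied by the integral representation in Theorem~\ref{tdr1} combined with H\"older's inequality. Specializing Theorem~\ref{tdr1} to the real diagonal matrix $A=\mathrm{diag}(x_1,\ldots,x_n)$ with $x\in\mathbb{R}^n_+$ and writing $f(\xi):=\sum_{i=1}^n x_i|\xi_i|^2\geq 0$, one obtains
\[
H_k(x)=\int_{\mathbb{S}^{n}} f(\xi)^k\,d\sigma(\xi).
\]
A standard application of H\"older's inequality on the probability space $(\mathbb{S}^{n},\sigma)$ then shows that the map $t\mapsto \log H_t(x)$ is convex on $\mathbb{R}_+$, and in particular on $\mathbb{N}$.

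For partitions of equal weight $|\lambda|=|\mu|$ with $\lambda\preceq\mu$, Karamata's inequality applied to the convex function $\phi(k):=\log H_k(x)$ yields $\sum_i\phi(\lambda_i)\leq\sum_j\phi(\mu_j)$; upon exponentiation and extraction of the $|\lambda|$-th root, this delivers $\mathfrak{H}_\lambda(x)\leq\mathfrak{H}_\mu(x)$. To extend this to partitions of different weights, I would exploit the elementary homogenization identity $\mathfrak{H}_{\lambda^{(c)}}(x)=\mathfrak{H}_\lambda(x)$, valid for every $c\in\mathbb{N}$, where $\lambda^{(c)}$ denotes the $c$-fold concatenation of $\lambda$ with itself (this follows immediately from $(\prod_i H_{\lambda_i})^c$ and the factor $1/(c|\lambda|)$ in the outer exponent). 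Replacing $\lambda$ and $\mu$ by $\widetilde\lambda:=\lambda^{(|\mu|)}$ and $\widetilde\mu:=\mu^{(|\lambda|)}$ produces two partitions of the common weight $|\lambda|\cdot|\mu|$ without altering the values $\mathfrak{H}_\lambda(x)$ and $\mathfrak{H}_\mu(x)$, and the hypothesis $\lambda\sqsubseteq\mu$ should translate into a standard majorization relation between $\widetilde\lambda$ and $\widetilde\mu$ which then feeds back into the equal-weight case.

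The main obstacle I foresee lies precisely in this combinatorial reduction: namely, translating the normalized majorization $\lambda/|\lambda|\preceq\mu/|\mu|$ into a standard majorization between $\widetilde\lambda$ and $\widetilde\mu$ requires a careful accounting of how the sorted partial sums of the concatenated sequences $(\lambda_1,\ldots,\lambda_1,\lambda_2,\ldots)$ (each $\lambda_i$ repeated $|\mu|$ times) and $(\mu_1,\ldots,\mu_1,\mu_2,\ldots)$ (each $\mu_j$ repeated $|\lambda|$ times) interact with the scaling factors $|\lambda|$ and $|\mu|$, and in particular one must pinpoint the correct direction of the comparison. Once this combinatorial step is settled, all remaining ingredients are classical, and the argument reduces cleanly to H\"older's inequality for the convexity of $\log H_t(x)$ together with Karamata's inequality.
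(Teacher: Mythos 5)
Your proposal follows essentially the same route as the paper: the theorem is not proved where it is stated (it is quoted from \cite{cuttler}), but it is recovered in Section 4 as a corollary of Theorem \ref{tmm1}, whose proof establishes Schur convexity of $\lambda\mapsto\prod_i\int_{\mathbb{S}^{n}}f^{\lambda_i}\,d\sigma$ and then uses exactly your replication trick $\widetilde\lambda=\lambda^{(|\mu|)}$, $\widetilde\mu=\mu^{(|\lambda|)}$. Your convexity input --- H\"older's inequality giving convexity of $t\mapsto\log H_t(x)$, followed by Karamata --- is the separable, single-variable form of the paper's argument, which instead proves joint (midpoint) convexity of the product of integrals via Fubini and AM--GM and then invokes the continuous-symmetric-convex criterion for Schur convexity; the two are interchangeable and both are correct for $x\in\mathbb{R}^n_+$. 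The step you leave open, translating $\lambda\sqsubseteq\mu$ into $\widetilde\lambda\preceq\widetilde\mu$, is precisely the step the paper also does not prove but cites from \cite{cuttler}. Your caution about the direction of that comparison is well founded: if $\sqsubseteq$ is read literally as the zero-padded normalized majorization $\lambda/|\lambda|\preceq\mu/|\mu|$, the translation (and indeed the theorem itself) fails --- for $\lambda=(2,2)$ and $\mu=(1)$ one has $(\tfrac12,\tfrac12)\preceq(1,0)$, yet $\mathfrak{H}_{(2,2)}=\sqrt{H_2}\geq H_1=\mathfrak{H}_{(1)}$ by Cauchy--Schwarz, with strict inequality in general --- so the lemma must be imported in the precise form used in \cite{cuttler}. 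With that lemma in hand your argument closes, and it carries the same (acknowledged) external dependence as the paper's.
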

In \cite{sra1},  the author used the Harish-Chandra-Itzykson-Zuber (HCIZ) integral to represent the term-normalized Schur polynomials in order to obtain a monotonicity result concerning  a conjecture formulated in \cite{cuttler}. As mentioned earlier, we shall next use the integral representation obtained in Section 2 to provide a generalization of the preceding theorem. Our work is based on majorization methods for moments and thus differs from the algebraic approach used in \cite{cuttler}.  Since the integrand in HCIZ involves two matrices, the author  in \cite{sra1} was able to express the term-normalized Schur polynomial as a single integral (over the set of unitary matrices). The existence of the exponential term in HCIZ was needed to obtain a   Schur convexity result. In our case, the integral is over a finite product of spheres and the integrand is given by powers of numerical values and so many difficulties naturally arise.

 For an arbitrary $z\in\mathbb{C}^n$, we shall  write $Z:=\texttt{diag}(z_1,\cdots,z_n)\in M_n(\mathbb{C})$. Thus, by (\ref{dr14}) and (\ref{dr15}) we have the following representation for the normalized complete symmetric polynomials
 \begin{equation}\label{xfxf4}
 H_k(z)=\int_{\mathbb{S}^{n}}\big(<Z\xi,\xi>\big)^kd\sigma(\xi)
 \end{equation}
 In \cite{hunter}, the author proved that $H_{2k}$ is positive definite on $\mathbb{R}^n$ using a differential operator approach and some identities between symmetric polynomials. Following a post by T. Tao concerning this issue,  two proofs were presented by I. Roventa and L. E. Temereanca for the positivity but still based on polynomial identities and some computations. In the next corollary, we  present our own  proof not only for the positivity but also for the definitness of this property for $H_{2k}$.
 \begin{corollary}
 The even degree complete symmetric polynomials are positive definite on $\mathbb{R}^n$.
 \end{corollary}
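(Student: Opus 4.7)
My plan is to read off the result directly from the integral representation (\ref{xfxf4}) established via Theorem \ref{tdr1}. For any $z\in\mathbb{R}^n$ and any $\xi\in\mathbb{S}^n$, the quantity
\[
\langle Z\xi,\xi\rangle \;=\; \sum_{j=1}^n z_j|\xi_j|^2
\]
is already a \emph{real} number, so raising it to the even power $2k$ produces a non-negative integrand. Integrating over $\mathbb{S}^n$ against the probability measure $\sigma$ then gives $H_{2k}(z)\ge 0$ for every real $z$.

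The only remaining task is to upgrade non-negativity to strict positivity whenever $z\neq 0$. For this I would argue that the integrand cannot vanish $\sigma$-almost everywhere. Indeed, if $z\neq 0$, pick an index $j_0$ with $z_{j_0}\neq 0$; the continuous function
\[
f(\xi)\;=\;\sum_{j=1}^n z_j|\xi_j|^2
\]
satisfies $f(e_{j_0})=z_{j_0}\neq 0$, so by continuity $f$ (and hence $f^{2k}$) is bounded away from zero on an open neighborhood of $e_{j_0}$ in $\mathbb{S}^n$. Such a neighborhood has strictly positive $\sigma$-measure, which forces
\[
H_{2k}(z)\;=\;\int_{\mathbb{S}^n} f(\xi)^{2k}\,d\sigma(\xi)\;>\;0.
\]

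I do not anticipate any genuine obstacle: the two ingredients are (i) the reality of $\langle Z\xi,\xi\rangle$ when $Z$ is a real diagonal matrix and (ii) a one-line continuity/support argument to rule out degeneracy. The key conceptual point worth emphasizing in the write-up is that the integral representation of Theorem \ref{tdr1} reduces the somewhat subtle algebraic statement on symmetric polynomials to the trivial observation that a non-trivial continuous non-negative function on $\mathbb{S}^n$ has positive average, thereby giving a proof that is visibly simpler than the differential-operator arguments of Hunter \cite{hunter} and the Schur-convexity route of Rovenţa and Temereanca \cite{roventa}.
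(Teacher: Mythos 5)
Your argument is correct and follows essentially the same route as the paper: both use the integral representation $H_{2k}(x)=\int_{\mathbb{S}^n}\big(\langle X\xi,\xi\rangle\big)^{2k}d\sigma(\xi)$ to get non-negativity, and both rule out degeneracy by a continuity argument (the paper phrases it as ``integral zero implies the continuous integrand vanishes a.e., hence everywhere, hence $x=0$,'' while you take the contrapositive via a neighborhood of $e_{j_0}$ of positive measure). These are the same proof up to reorganization.
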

\begin{proof}
Using the representaion (\ref{xfxf4}),  we know that for any $x\in\mathbb{R}^n$ we have \begin{equation}\label{xfxf5}
H_{2k}(x)=\int_{\mathbb{S}^{n}}\big(<X\xi,\xi>\big)^{2k}d\sigma(\xi),
\end{equation}
where $X=\texttt{diag}(x_1,\cdots,x_n)$ so that  positivity is clear. If $H_{2k}(x)=0$ then the map $$\mathbb{S}^n\ni\xi\longrightarrow\big(<X\xi,\xi>\big)^2\in\mathbb{R}_+$$
is equal to zero almost everywhere on $\mathbb{S}^n$. By continuity of the preceding map,  we obtain $<X\xi,\xi>=0$ for all $\xi\in\mathbb{S}^n$ or equivalently $x=0$.
\end{proof}
Motivated by the latter property, we introduce the following definition and we shall use the same notation as in (\ref{xfxf-1})
\begin{definition}
	Given $\nu=(\nu_1,\nu_2,\cdots,\nu_m)\in \mathbb{N}^m$,  let the map $\mathfrak{H}_{2\nu}$  be defined on $\mathbb{R}^n$  by $$\mathfrak{H}_{2\nu}(x):=\sqrt[2|\nu|]{H_{2\nu}(x)}.$$
\end{definition}

In the following proposition, we examine the possibility of an  extension Formula (\ref{xfxf5}) from  the case of integer powers to the case of arbitrary real powers.
 \begin{proposition}
 Let $X\in\ M_n(\mathbb{C})$ be a non-zero positive semidefinite matrix. Then for any $p\in\mathbb{R}$, the integral
  \begin{equation}\label{xfxf10}
\int_{\mathbb{S}^{n}}\big(<X\xi,\xi>\big)^{p}d\sigma(\xi)
 \end{equation}
 is well defined with values in $]0,\infty[$.
 \end{proposition}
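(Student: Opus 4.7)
The plan is to diagonalize $X$ via the spectral theorem, use the unitary invariance of $\sigma$ to reduce to the diagonal case, and then reduce the finiteness question to an elementary Beta-type integral in one variable.

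First, write $X = U^{*}DU$ with $D = \mathrm{diag}(\lambda_{1}, \ldots, \lambda_{n})$ and $\lambda_{i} \geq 0$, and apply the change of variable $\xi \mapsto U\xi$ together with the unitary invariance of $\sigma$ (exactly as in the proof of Theorem \ref{tdr1}) to reduce to $X = D$. The integrand $\langle D\xi, \xi\rangle = \sum_{i} \lambda_{i}|\xi_{i}|^{2}$ is then continuous and non-negative on $\mathbb{S}^{n}$, and vanishes only on the set $\{\xi \in \mathbb{S}^{n} : \xi_{i} = 0 \text{ whenever } \lambda_{i} > 0\}$, which has $\sigma$-measure zero because $X \neq 0$. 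Hence $(\langle X\xi, \xi\rangle)^{p}$ is measurable and $\sigma$-almost everywhere strictly positive for every $p \in \mathbb{R}$, so the integral automatically belongs to $(0, +\infty]$ and only finiteness is at issue.

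The case $p \geq 0$ is immediate, since the integrand is bounded above by $\|X\|_{2}^{p}$ on the compact set $\mathbb{S}^{n}$. The substantive case is $p < 0$, where the integrand develops a singularity near $\ker X \cap \mathbb{S}^{n}$. Let $r = \mathrm{rank}(X)$ and let $\lambda_{\min}^{+} > 0$ be the smallest positive eigenvalue of $X$. Then
\[
\langle X\xi, \xi\rangle \;\geq\; \lambda_{\min}^{+} \sum_{i:\lambda_{i}>0} |\xi_{i}|^{2} \;=:\; \lambda_{\min}^{+}\, t(\xi),
\]
so the finiteness of the target integral is controlled by that of $\int_{\mathbb{S}^{n}} t(\xi)^{p}\, d\sigma$. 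At this point I would invoke the classical fact that, under the orthogonal splitting $\mathbb{C}^{n} = \mathrm{range}(X) \oplus \ker X$, the push-forward of $\sigma$ under $t$ is the $\mathrm{Beta}(r,n-r)$ law on $[0,1]$ (with the convention $t \equiv 1$ when $r = n$); this follows from $U(r) \times U(n-r)$-invariance of $\sigma$ together with a direct moment computation using Lemma \ref{ldr2}. The problem therefore reduces to the Eulerian integral $\int_{0}^{1} t^{p+r-1}(1-t)^{n-r-1}\, dt$, finite precisely for $p > -r$ and in particular for every $p \in \mathbb{R}$ when $X$ is positive definite.

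The main obstacle I anticipate is essentially one of framing rather than of technique: the Eulerian computation above shows that for a merely semidefinite $X$ with $r < n$ and $p \leq -r$ the integral actually diverges, so the stated conclusion in $(0,+\infty)$ tacitly requires either the stronger assumption that $X$ is positive definite or a restriction to $p > -\mathrm{rank}(X)$. Under either interpretation, the Beta push-forward argument sketched above, once verified carefully by matching integer moments against Lemma \ref{ldr2}, completes the proof.
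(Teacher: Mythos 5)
Your argument is essentially correct, and the route you take for $p<0$ departs from the paper's at exactly the point where the paper's own proof breaks down. The paper likewise reduces to diagonal $X$ and shows that $E=\{\xi\in\mathbb{S}^n : \langle X\xi,\xi\rangle=0\}$ is a $\sigma$-null set, but it then concludes by asserting the two-sided bound $\check{x}\le\langle X\xi,\xi\rangle\le\hat{x}$ for all $\xi\in E^c$, where $\check{x}$ is the smallest \emph{nonzero} eigenvalue. That lower bound is false whenever $X$ is singular: for $X=\mathrm{diag}(1,0)$ and $\xi=(\varepsilon,\sqrt{1-\varepsilon^2})$ one has $\langle X\xi,\xi\rangle=\varepsilon^2\to 0$ while $\check{x}=1$; discarding a null set does not keep the integrand away from its singularity. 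So the paper's finiteness argument for $p<0$ is only valid when $X$ is positive definite. Your Beta-law reduction does the analysis correctly: with $r=\mathrm{rank}(X)$, the vector $(|\xi_1|^2,\dots,|\xi_n|^2)$ is uniformly distributed on the simplex, so $t(\xi)=\sum_{i:\lambda_i>0}|\xi_i|^2$ has the $\mathrm{Beta}(r,n-r)$ law (alternatively, match moments via Lemma~\ref{ldr2}), and the two-sided comparison $\lambda_{\min}^{+}\,t(\xi)\le\langle X\xi,\xi\rangle\le\lambda_{\max}\,t(\xi)$ shows the integral is finite precisely when $p>-r$.

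Consequently, what you flag as a possible ``framing obstacle'' is in fact a genuine defect in the statement, not in your proof: for a singular nonzero positive semidefinite $X$ and $p\le-\mathrm{rank}(X)$ the integral equals $+\infty$ (already $X=\mathrm{diag}(1,0,\dots,0)$ and $p=-1$ give, up to a constant, $\int_0^1 t^{-1}(1-t)^{n-2}\,dt=\infty$), so the conclusion that the value lies in $]0,\infty[$ requires either positive definiteness of $X$ or the restriction $p>-\mathrm{rank}(X)$. This also propagates to the subsequent definition of $H_p$ on $\mathbb{R}^n_+$, which evaluates this integral at diagonal matrices built from vectors that may have zero entries. Once the Beta push-forward is verified as you indicate, your argument is the correct repair of the proposition.
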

 \begin{proof}
 Suppose first that $p\geq0$, then clearly the proposition follows from the inequality
 $$\int_{\mathbb{S}^{n}}\big(<X\xi,\xi>\big)^{p}d\sigma(\xi)\leq \int_{\mathbb{S}^{n}}\big(w(X)\big)^{p}d\sigma(\xi)=\big(w(X)\big)^{p},$$
 where $w(\cdot)$ denotes the numerical radius norm. On the other hand, if  $p<0$ and if $v$ denotes the Lebesgue measure on $\mathbb{C}^n$. then we  consider the following set
 $$F:=\Big\{z\in\mathbb{C}^n\mid <Xz,z>=0\Big\}.$$
 As $X\geq0$ and $X\neq0$ then $F=\ker X$ which is an intersection of hyperplanes in $\mathbb{C}^n$.  So that $v(F)=0$ or equivalently $F\backslash\{0\}$ is measurable  and  $v(F\backslash\{0\})=0.$ Next, we  consider the set defined by
 $$E:=\Big\{\xi\in\mathbb{S}^n\mid <X\xi,\xi>=0\Big\}.$$
 By the definition of the sigma-algebra  on $\mathbb{S}^n$ (cf. for example Chapter 6, Section 3 in \cite{stein}), the set $E$ is measurable if and only if the set
 $$\widetilde{E}:=\Big\{z\in B(0,1)\mid z\neq0 \mbox{ and } \frac{z}{|z|}\in E\Big\}$$
 is Lebesgue measurable in $\mathbb{C}^n$. Notice that by representing $z$ in polar coordinates $z=r\xi$ with $r\in]0,\infty[$ we obtain
  $$\widetilde{E}=B(0,1)\cap F\backslash\{0\}$$
  and hence $E$ is measurable.  Let $\mu_1$ be the corresponding radial measure on $(0,\infty)$ i.e. $\mu_1(\theta)=\int_\theta r^{2n-1}dr$ for every Lebesgue measurable set $\theta$ in $(0,\infty)$. Since $F\backslash\{0\}=(0,\infty)\times E=\bigcup_{l\in\mathbb N}(0,l)\times E$ then $$0=v(F\backslash\{0\})=\lim_{l\rightarrow\infty}\mu_1((0,l))\sigma(E)=\lim_{l\rightarrow\infty}\frac{l}{2n}\sigma(E)$$ which holds only in the case when $\sigma(E)=0$. Let  $f$ be the map on $\mathbb{S}^n$ defined by  $f(\xi)=\big(<X\xi,\xi>\big)^{p}$ with values in $[0,\infty]$. As $f$ is real valued continuous function on $E^c:=\mathbb{S}^n\backslash\{E\}$ then $f$ is measurable on $E^c$ and therefore on $\mathbb{S}^n$. Moreover, as $E$ is a null-$\sigma$-set then
  \begin{equation}\label{xfxf-2}
  \int_{\mathbb{S}^{n}}\big(<X\xi,\xi>\big)^{p}d\sigma(\xi)=\int_{\mathbb{S}^{n}} \big(<X\xi,\xi>\big)^{p}\chi_{E^c}(\xi)d\sigma(\xi).
  \end{equation}
 By the unitary invariance of $\sigma$, one can assume that $X$ is diagonal and the preceding formula remains true. Indeed, compared to the work in Section 1 the only factor that needs to be examined is $\chi_{E^c}$. However, writing $X=U^\star\Delta U$ with $\Delta$ being diagonal matrix and $U\in U_n$ then by the invariance of the Euclidean norm under unitary transformation we have
 $$E=\Big\{\xi\in\mathbb{S}^n\mid <X\xi,\xi>=0\Big\}=\Big\{\xi\in\mathbb{S}^n\mid <\Delta U\xi,U\xi>=0\Big\}=\Big\{\eta\in\mathbb{S}^n\mid <\Delta \eta,\eta>=0\Big\},$$
where the last equality folllows from th e fact that $U$ is an isometry. The above equation shows that
 $\chi_{E}(U\xi)=1$ if and only if  $\xi\in\mathbb{S}^n$ and $<\Delta U\xi,U\xi>=0$ which is equivalent to say that $\xi\in E$. Therefore, $\chi_{E^c}(U\xi)=\chi_{E^c}(\xi)$ and thus we can assume $X=\texttt{diag}(x_1,\cdots,x_n)$ with $x=\lambda(X)$ being a non-zero vector in $\mathbb R^n_+$. It remains to prove that the value of the integral is strictly positive. For this, we let $\check{x}=\min\{x_i\mid x_i\neq0\}$ and $\hat{x}=\max\{x_i\mid x_i\neq0\}$ then clearly $\check{x}>0$,  $\hat{x}>0$ and
 $$\hat{x}^p\leq\big(<X\xi,\xi>\big)^{p}\leq \check{x}^p,$$
 where the above inequality holds for all $\xi\in E^c$. Therefore, by (\ref{xfxf-2}) we get
  $$\hat{x}^p\leq\int_{\mathbb{S}^{n}}\big(<X\xi,\xi>\big)^{p}d\sigma(\xi)\leq\check{x}^p.$$
 \end{proof}
Motivated by the above result,  we introduce the following generalization for normalized  (and term normalized) complete homogeneous polynomials on $\mathbb{R}^n_+$.
\begin{definition}
Let $p\in\mathbb{R}$. The $p$-homogeneous function $H_p$ is the map defined on $\mathbb{R}^n_+$ by
 \begin{equation}\label{xfxf8}H_p(x)=\begin{cases}
 \int_{\mathbb{S}^{n}}\big(<X\xi,\xi>\big)^{p}d\sigma(\xi)& \mbox{if} \quad x\neq0\\
0, & \mbox{if}\quad x=0,
\end{cases}\end{equation}
where $X=\texttt{diag}(x_1,\cdots,x_n)$. In addition, given $\lambda=(\lambda_1,\cdots,\lambda_m)\in\mathbb{R}^m$ we define the term-normalized  $\lambda$-function by
\begin{equation}\label{xfxf6}
H_\lambda(x)=\prod_{i=1}^mH_{\lambda_i}(x).
\end{equation}
 \end{definition}
In \cite{tong}, and based on Muirhead Theorem,  Y. L. Tong  provided an inequality for the expectation of product of independent random variables with a majorization condition. The case where the probability measure is considered on $[0,\infty[$ can be found in \cite{Marshall} (see p. 107). We formulate this result in the context of our work and we provide a proof motivated by the work of S. Sra in \cite{sra1} which is based on a Schur's result for convex symmetric functions.
\begin{theorem}\label{tmm1}
	Let $f:\mathbb{S}^n\longrightarrow \mathbb{R}_+$ be a measurable function. Let $I\subseteq\mathbb{R}$ be an interval and assume that
	$$\int_{\mathbb{S}^{n}}f^p(\xi)d\sigma(\xi)<\infty$$
	for all $p\in I$. Then for any $m\in\mathbb{N}$ the map \begin{equation}\label{fz1}F(\lambda):=\prod_{i=1}^m\int_{\mathbb{S}^{n}}f^{\lambda_i}(\xi)d\sigma(\xi)\end{equation}
is Schur convex on $I^m$. Moreover, if $\lambda,\mu\in I^m$ are integer partitions  with $\lambda\sqsubseteq\mu$ then
\begin{equation}\label{fz2}
\sqrt[|\lambda|]{F(\lambda)}\leq\sqrt[|\mu|]{F(\mu)}
\end{equation}
\end{theorem}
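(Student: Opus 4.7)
The plan is to tackle the two assertions in sequence, with the Schur convexity of the first part serving as the engine for the normalized inequality in the second. The analytical input I would exploit in the first part is the log-convexity of the scalar moment function
\[
g(p) := \int_{\mathbb{S}^{n}} f^{p}(\xi)\, d\sigma(\xi), \qquad p \in I.
\]
This log-convexity is a direct consequence of H\"older's inequality: writing $f^{\theta p + (1-\theta) q} = f^{\theta p} \cdot f^{(1-\theta) q}$ and applying H\"older with conjugate exponents $1/\theta$ and $1/(1-\theta)$ yields
\[
g(\theta p + (1-\theta) q) \le g(p)^{\theta}\, g(q)^{1-\theta}, \qquad \theta \in [0,1],\ p,q \in I,
\]
so $\psi := \log g$ is convex on $I$. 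Consequently $\log F(\lambda) = \sum_{i=1}^{m} \psi(\lambda_i)$ is a symmetric sum of a convex function of the individual coordinates, which is the prototypical Schur-convex form (either by the Schur--Ostrowski criterion, or by the Hardy--Littlewood--P\'olya characterization of majorization through convex test functions). Since exponentiation is monotone, $F$ itself is Schur convex on $I^{m}$, giving (\ref{fz1}). This mirrors Tong's strategy in \cite{tong}.

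For the normalized inequality (\ref{fz2}), my plan is a concatenation reduction that converts the ratio comparison into an ordinary majorization. Given integer partitions $\lambda, \mu \in I^{m}$ with $\lambda \sqsubseteq \mu$, set $p := |\mu|$ and $q := |\lambda|$, and introduce the juxtaposed vectors $\widetilde\lambda := (\lambda, \lambda, \ldots, \lambda)$ (concatenation of $p$ copies of $\lambda$) and $\widetilde\mu := (\mu, \mu, \ldots, \mu)$ ($q$ copies of $\mu$); after padding with zeros to a common length $N$, both lie in $I^{N}$ and share the common weight $|\widetilde\lambda| = |\widetilde\mu| = pq$. The multiplicative structure of $F$ gives $F(\widetilde\lambda) = F(\lambda)^{p}$ and $F(\widetilde\mu) = F(\mu)^{q}$, so the desired bound $\sqrt[|\lambda|]{F(\lambda)} \le \sqrt[|\mu|]{F(\mu)}$ is, upon raising to the power $pq$, equivalent to $F(\widetilde\lambda) \le F(\widetilde\mu)$. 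The first part of the theorem then delivers this bound provided one can check the ordinary majorization $\widetilde\lambda \preceq \widetilde\mu$.

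The hard part is exactly this last verification. I would compare the piecewise-linear cumulative functions $C_\nu$ attached to a partition $\nu$, which are concave because the parts of $\nu$ are monotonically decreasing. A direct computation shows $C_{\widetilde\lambda}(s) = p\, C_\lambda(s/p)$ and $C_{\widetilde\mu}(s) = q\, C_\mu(s/q)$, so the sought majorization rewrites as the pointwise bound
\[
\frac{C_\lambda(qt)}{|\lambda|} \le \frac{C_\mu(pt)}{|\mu|}, \qquad t \ge 0.
\]
The hypothesis $\lambda/|\lambda| \preceq \mu/|\mu|$ supplies this comparison at the common integer break-points after rescaling, and concavity of the cumulative functions should close the gap across the non-aligned grids determined by the scale factors $p$ and $q$. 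Handling the mismatch between these two grids, and in particular the case $p \ne q$, is the principal technical obstacle of the proof and is where the integer-partition hypothesis enters essentially.
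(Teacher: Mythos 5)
Your treatment of the Schur convexity is correct but takes a genuinely different route from the paper. The paper passes to the product space $(\mathbb{S}^n)^m$ via Fubini, proves midpoint convexity of $F$ by the AM--GM inequality applied to the two integrands $\prod_i f^{\lambda_i}$ and $\prod_i f^{\mu_i}$, and then invokes the fact that a continuous, symmetric, convex function is Schur convex. You instead prove log-convexity of the single-variable moment function $g(p)=\int_{\mathbb{S}^n} f^p\,d\sigma$ by H\"older and then use the classical fact that $\lambda\mapsto\sum_i\psi(\lambda_i)$ is Schur convex for convex $\psi$. Your argument is shorter, avoids the product space entirely, and yields the strictly stronger conclusion that $F$ is Schur \emph{log}-convex; the only point to watch is the degenerate case $g(\lambda_i)=0$ (i.e.\ $f=0$ a.e.), where the logarithm is unavailable but the claim is trivial.

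For the second assertion you and the paper use the same concatenation device, reducing $\sqrt[|\lambda|]{F(\lambda)}\le\sqrt[|\mu|]{F(\mu)}$ to the ordinary majorization $\widetilde\lambda\preceq\widetilde\mu$ of the repeated partitions. You correctly isolate this as the crux and reformulate it as the pointwise bound $C_\lambda(qt)/|\lambda|\le C_\mu(pt)/|\mu|$, but you do not prove it --- and it cannot be proved from $\lambda\sqsubseteq\mu$ alone. Take $\lambda=(2)$ and $\mu=(1)$: then $\lambda/|\lambda|=\mu/|\mu|$, so $\lambda\sqsubseteq\mu$, yet $\widetilde\lambda=(2,0)$ is not majorized by $\widetilde\mu=(1,1)$, and the asserted conclusion $\sqrt{g(2)}\le g(1)$ contradicts the Cauchy--Schwarz inequality $g(1)^2\le g(0)g(2)=g(2)$, which is strict for non-constant $f$. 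So the gap you flagged is real, and no concavity argument will close it, because the implication is false without the additional hypothesis $|\lambda|\le|\mu|$. Under that hypothesis your own setup finishes immediately by monotonicity alone: writing $\widehat{C}_\nu=C_\nu/|\nu|$ for the piecewise-linear normalized cumulative function, one has $\widehat{C}_\lambda(s/|\mu|)\le\widehat{C}_\lambda(s/|\lambda|)\le\widehat{C}_\mu(s/|\lambda|)$, the first step since $\widehat{C}_\lambda$ is increasing and $s/|\mu|\le s/|\lambda|$, the second from $\lambda\sqsubseteq\mu$ extended to real arguments by piecewise linearity. The paper sidesteps the issue by citing \cite{cuttler} for the ``equivalence'' $\lambda\sqsubseteq\mu\Leftrightarrow\lambda^{|\mu|}\preceq\mu^{|\lambda|}$, which is false as stated; your instinct that this is the principal obstacle was sound, and adding $|\lambda|\le|\mu|$ to the hypotheses is what makes both your proof and the theorem correct.
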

\begin{proof}
As $F$ is continuous and symmetric it is sufficient to prove $F$ is convex (cf. \cite{Marshall} p. 97). Now, by Fubini's theorem we write
$$F(\lambda)=\int_{\mathbb{S}^{n}\times\mathbb{S}^{n}\times\cdots\mathbb{S}^{n}}\prod_{i=1}^mf^{\lambda^i}d(\xi^i),$$
where $\xi^i$ is the coordinates in the $i$-th copy of $\mathbb{S}^{n}$. Thus, for $\lambda,\mu\in I^m$, the power mean inequality yields
\begin{equation*}
F(\frac{\lambda+\mu}{2})=\int_{\mathbb{S}^{n}\times\mathbb{S}^{n}\times\cdots\mathbb{S}^{n}}\sqrt{\prod_{i=1}^m f^{\lambda^i}}\sqrt{\prod_{i=1}^mf^{\mu^i}}\\
\leq
\int_{\mathbb{S}^{n}\times\mathbb{S}^{n}\times\cdots\mathbb{S}^{n}}\frac{\prod_{i=1}^m f^{\lambda^i}+\prod_{i=1}^mf^{\mu^i}}{2}
=\frac{F(\lambda)}{2}+\frac{F(\mu)}{2}.
\end{equation*}
In case where $\lambda,\mu\in I^m$ are integer partitions the condition $\lambda\sqsubseteq\mu$ is equivalent to $\lambda^{|\mu|}\preceq\mu^{|\lambda|}$ (cf. \cite{cuttler}). Hence applying the Schur convexity of $F$ on $I^{m|\mu}$ we obtain the following.
\begin{align*}
\prod_{i=1}^m\int_{\mathbb{S}^{n}}f^{\lambda_i}&=\Big[\prod_{i=1}^m\int_{\mathbb{S}^{n}}f^{\lambda_i}\Big]^{\frac{|\mu|}{|\mu|}}\\
&=\Big[\underbrace{\int_{\mathbb{S}^{n}}f^{\lambda_1}\int_{\mathbb{S}^{n}}f^{\lambda_1}\cdots\int_{\mathbb{S}^{n}}f^{\lambda_1}}_{|\mu|}\underbrace{\int_{\mathbb{S}^{n}}f^{\lambda_2}\int_{\mathbb{S}^{n}}f^{\lambda_2}\cdots\int_{\mathbb{S}^{n}}f^{\lambda_2}}_{|\mu|}\cdots\underbrace{\int_{\mathbb{S}^{n}}f^{\lambda_m}\int_{\mathbb{S}^{n}}f^{\lambda_m}\cdots\int_{\mathbb{S}^{n}}f^{\lambda_m}}_{|\mu|}\Big]^{\frac{1}{|\mu|}}\\
&\leq\Big[\underbrace{\int_{\mathbb{S}^{n}}f^{\mu_1}\int_{\mathbb{S}^{n}}f^{\mu_1}\cdots\int_{\mathbb{S}^{n}}f^{\mu_1}}_{|\lambda|}\underbrace{\int_{\mathbb{S}^{n}}f^{\mu_2}\int_{\mathbb{S}^{n}}f^{\mu_2}\cdots\int_{\mathbb{S}^{n}}f^{\mu_2}}_{|\lambda|}\cdots\underbrace{\int_{\mathbb{S}^{n}}f^{\mu_m}\int_{\mathbb{S}^{n}}f^{\mu_m}\cdots\int_{\mathbb{S}^{n}}f^{\mu_m}}_{|\lambda|}\Big]^{\frac{1}{|\mu|}}\\
&=\Big[\prod_{i=1}^m\int_{\mathbb{S}^{n}}f^{\mu_i}\Big]^{\frac{|\lambda|}{|\mu|}}.
\end{align*}
\end{proof}
The following  corollaries are the main results of this section.
Applying the apreceding theorem to the case $f(\xi)=<X\xi,\xi>$, whenever $x\in\mathbb{R}^n_+$ we obtain the following generalization of Theorem 4.1 for the case of term-normalized complete homogeneous polynomials to the term-normalized $\lambda$-function as follows.
\begin{corollary}
Let $\lambda\in\mathbb{R}^m$ and $\mu\in\mathbb{R}^l$. If $\lambda\preceq\mu$ then
$$H_\lambda(x)\leq H_\mu(x), \quad x\in\mathbb{R}^n_+.$$
Moreover, if $\lambda,\mu$ are integer partitions  with $\lambda\sqsubseteq\mu$ then
$$\mathfrak{H}_\lambda(x)\leq\mathfrak{H}_\mu(x), \quad x\in\mathbb{R}^n_+.$$
\end{corollary}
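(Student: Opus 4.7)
The plan is to deduce the corollary as an essentially immediate consequence of Theorem \ref{tmm1} by specializing the measurable function to $f(\xi)=\langle X\xi,\xi\rangle$, where $X=\texttt{diag}(x_1,\dots,x_n)$. First I would dispose of the trivial case $x=0$: by Definition 4.2 every factor $H_{\lambda_i}(0)$ collapses to $0$, so both sides in either inequality vanish and there is nothing to prove. Henceforth assume $x\in\mathbb{R}^n_+\setminus\{0\}$, so that $X$ is a non-zero positive semidefinite matrix and $f\geq 0$ on $\mathbb{S}^n$.

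The next step is to verify the finiteness hypothesis of Theorem \ref{tmm1} with $I=\mathbb{R}$. This is exactly what Proposition 4.1 delivers: for every real $p$,
\[
\int_{\mathbb{S}^n}f(\xi)^{p}\,d\sigma(\xi)=\int_{\mathbb{S}^n}\bigl(\langle X\xi,\xi\rangle\bigr)^{p}\,d\sigma(\xi)\in\,]0,\infty[,
\]
so Theorem \ref{tmm1} is applicable. With Definition 4.2 in hand, the functional $F$ from (\ref{fz1}) then coincides with the term-normalized $\lambda$-function:
\[
F(\lambda)=\prod_{i=1}^{m}\int_{\mathbb{S}^n}f^{\lambda_i}\,d\sigma=\prod_{i=1}^{m}H_{\lambda_i}(x)=H_\lambda(x).
\]

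Before quoting Theorem \ref{tmm1} I would handle the bookkeeping caused by $\lambda\in\mathbb{R}^m$ and $\mu\in\mathbb{R}^l$ possibly having different lengths. Following the convention stated at the beginning of Section 4, pad the shorter tuple with zeros to obtain the common length $M=\max(m,l)$. Since $f^0\equiv 1$ on $\mathbb{S}^n$ we have $H_0(x)=1$ for $x\neq 0$, so the padded entries contribute a factor of $1$ to $F$ and leave $H_\lambda(x)$, $H_\mu(x)$ unchanged; in particular $|\lambda|$ and $|\mu|$ are also unchanged. Thus the hypotheses $\lambda\preceq\mu$ and $\lambda\sqsubseteq\mu$ are preserved by this padding and may be read inside $\mathbb{R}^M$.

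With these reductions, the first inequality is precisely the Schur convexity of $F$ from Theorem \ref{tmm1}: $\lambda\preceq\mu$ implies $F(\lambda)\leq F(\mu)$, i.e.\ $H_\lambda(x)\leq H_\mu(x)$. The second inequality is the second conclusion of the same theorem: $\lambda\sqsubseteq\mu$ (which means $\lambda/|\lambda|\preceq\mu/|\mu|$) yields $F(\lambda)^{1/|\lambda|}\leq F(\mu)^{1/|\mu|}$, which is exactly $\mathfrak{H}_\lambda(x)\leq\mathfrak{H}_\mu(x)$ after extracting the $|\lambda|$-th and $|\mu|$-th roots in Definition 4.1. There is no real obstacle here; the entire substance of the corollary has already been packaged into Theorem \ref{tmm1}, and the only points requiring care are the trivial case $x=0$, the integrability step via Proposition 4.1, and the zero-padding used to place $\lambda$ and $\mu$ in a common space where the majorization orderings $\preceq$ and $\sqsubseteq$ are defined.
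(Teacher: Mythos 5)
Your proposal is correct and follows exactly the route the paper intends: the paper derives this corollary by applying Theorem \ref{tmm1} to $f(\xi)=\langle X\xi,\xi\rangle$ with the finiteness hypothesis supplied by Proposition 4.1, which is precisely your argument. The extra care you take with the case $x=0$ and with zero-padding tuples of different lengths is consistent with the conventions stated at the start of Section 4 and only makes the deduction more complete than the paper's one-line justification.
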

Now let $x\in\mathbb{R}^n$ and consider the map $f(\xi)=\big(<X\xi,\xi>\big)^2$ then for any $p\geq0$ we have $$\int_{\mathbb{S}^{n}}f^p(\xi)d\sigma(\xi)<\infty.$$
Notice that if 	$\lambda=(2\nu_1,\cdots,2\nu_m)$ and $\mu=(2\eta_1,2\eta_2,\cdots,2\eta_l)$ are two integer partitions then $\lambda\sqsubseteq\mu$ if and only if $\nu\sqsubseteq\eta$. Applying Theorem \ref{tmm1} to $f(\xi)=\big(<X\xi,\xi>\big)^2$ we obtain
	\begin{corollary}
Let $\lambda\in\mathbb{R}^m_+$ and $\mu\in\mathbb{R}^l_+$. If $\lambda\preceq\mu$ then
$$\prod_{i=1}^m\int_{\mathbb{S}^{n}}\bigg[\big(<X\xi,\xi>\big)^2\bigg]^{\lambda_i}d\sigma(\xi)\leq \prod_{i=1}^l\int_{\mathbb{S}^{n}}\bigg[\big(<X\xi,\xi>\big)^2\bigg]^{\mu_i}d\sigma(\xi), \quad x\in\mathbb{R}^n.$$	
If in addition 	$\nu,\eta$ are integer partitions  with $\nu\sqsubseteq\eta$ then
\begin{equation}\label{fe1}
\mathfrak{H}_{2\nu}(x)\leq\mathfrak{H}_{2\eta}(x), \quad x\in\mathbb{R}^n.
\end{equation}
\end{corollary}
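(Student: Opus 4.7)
The plan is to invoke Theorem \ref{tmm1} with the concrete choice $f(\xi):=(\langle X\xi,\xi\rangle)^2$, where $X=\texttt{diag}(x_1,\ldots,x_n)$ for the given $x\in\mathbb{R}^n$. The reason for squaring is that, for $x\in\mathbb{R}^n$ (as opposed to $\mathbb{R}^n_+$), the numerical value $\langle X\xi,\xi\rangle=\sum_i x_i|\xi_i|^2$ can be negative, while Theorem \ref{tmm1} requires a non-negative integrand. Squaring eliminates the sign yet preserves the algebraic form, since $(\langle X\xi,\xi\rangle)^{2\nu_i}=f(\xi)^{\nu_i}$, which is exactly what is needed to identify the quantities $H_{2\nu_i}(x)$ with $\int_{\mathbb{S}^n}f^{\nu_i}d\sigma$.

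First I would verify the hypothesis of Theorem \ref{tmm1}: the function $f$ is non-negative and continuous on the compact set $\mathbb{S}^n$, hence bounded, so $\int_{\mathbb{S}^n} f^p\,d\sigma<\infty$ for every $p$ in the interval $I:=[0,\infty)$. For the first claimed inequality, given $\lambda\in\mathbb{R}^m_+$ and $\mu\in\mathbb{R}^l_+$ with $\lambda\preceq\mu$, I would pad the shorter tuple by zeros so that both live in $I^{\max(m,l)}$; this is harmless because $\int_{\mathbb{S}^n} f^0\,d\sigma=1$, so appending zero entries leaves the product $F(\lambda)=\prod_i\int_{\mathbb{S}^n}f^{\lambda_i}d\sigma$ unchanged. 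The Schur convexity of $F$ (the first half of Theorem \ref{tmm1}) then yields the desired inequality at once.

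For the second claim, the key algebraic observation is that if $\nu,\eta$ are integer partitions and $\lambda:=2\nu$, $\mu:=2\eta$ denote their doublings, then $\lambda\sqsubseteq\mu$ is equivalent to $\nu\sqsubseteq\eta$, because $\lambda/|\lambda|=\nu/|\nu|$ and $\mu/|\mu|=\eta/|\eta|$. Rewriting $H_{2\nu}(x)=\prod_i\int_{\mathbb{S}^n}(\langle X\xi,\xi\rangle)^{2\nu_i}d\sigma=\prod_i\int_{\mathbb{S}^n}f^{\nu_i}d\sigma=F(\nu)$ (and similarly for $\eta$), one identifies $\mathfrak{H}_{2\nu}(x)=F(\nu)^{1/(2|\nu|)}$ and $\mathfrak{H}_{2\eta}(x)=F(\eta)^{1/(2|\eta|)}$. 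The second half of Theorem \ref{tmm1}, applied to the pair $\nu\sqsubseteq\eta$ (again after zero-padding if the lengths disagree), gives $F(\nu)^{1/|\nu|}\le F(\eta)^{1/|\eta|}$, and a further square root produces \eqref{fe1}.

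I do not expect a genuine obstacle: the whole argument is a specialization of the already-established Theorem \ref{tmm1} to the concrete kernel $f=(\langle X\cdot,\cdot\rangle)^2$. The only items requiring care are the sign-changing nature of $\langle X\xi,\xi\rangle$ when $x$ has mixed signs (handled by squaring, which is the essential trick enabling the extension from $\mathbb{R}^n_+$ to $\mathbb{R}^n$), the potentially unequal lengths of $\lambda,\mu$ and of $\nu,\eta$ (handled by zero-padding together with $\int f^0 d\sigma=1$), and the elementary equivalence $2\nu\sqsubseteq2\eta\iff\nu\sqsubseteq\eta$ which converts the majorization hypothesis on the doubled partitions into one directly usable in Theorem \ref{tmm1}.
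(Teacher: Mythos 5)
Your proposal is correct and follows essentially the same route as the paper: choose $f(\xi)=(\langle X\xi,\xi\rangle)^2$ so that the integrand is non-negative for arbitrary $x\in\mathbb{R}^n$, observe that $\int_{\mathbb{S}^n}f^p\,d\sigma<\infty$ for all $p\geq 0$ and that $2\nu\sqsubseteq 2\eta$ is equivalent to $\nu\sqsubseteq\eta$, and then apply Theorem \ref{tmm1}. Your remarks on zero-padding and on extracting the extra square root to pass from $F(\nu)^{1/|\nu|}\leq F(\eta)^{1/|\eta|}$ to $\mathfrak{H}_{2\nu}\leq\mathfrak{H}_{2\eta}$ make explicit two small steps the paper leaves implicit, but the argument is the same.
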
	

Note that applying the above equation to the cases

$\begin{cases}
\nu=k-1 \quad \mbox{and} \quad \eta=k\\
\nu=(k-1,k+1) \quad \mbox{and} \quad  \eta=(k,k)
\end{cases}$

provides us with an entire sequence of inequalities similar to Newton's identities which is valid for all $x\in\mathbb{R}^n$. More precisely, we have
\begin{equation}\label{fe2}
\sqrt[2k-2]{H_{2k-2}(x)}\leq\sqrt[2k]{H_{2k}(x)}, \quad x\in\mathbb{R}^n
\end{equation}
and
\begin{equation}\label{fe3}
H_{2k-2}(x)H_{2k+2}(x)\leq H^2_{2k}(x), \quad x\in\mathbb{R}^n.
\end{equation}
Finally,we conclude by noting  that (\ref{fe2}) provides a proof for the open problem concerning  the sequence of inequalities that was raised in \cite{roventa}.

	\bibliographystyle{plainnat}

\begin{thebibliography}{20}{
			\bibitem{agbor} D. Agbor and W. Bauer, Heat flow and an algebra of Toeplitz operators, \emph{Integral Equations and Operator theory}  70 (2015), pp. 271-299 .
			
			
			\bibitem{axler}S. Axler, P. Bourdon and R. Wade, Harmonic Function Theory, \emph{Springer Science and Business Media}, Vol. 137 Graduate texts in Mathematics, 2013.
			\bibitem{aubrun} G. Aubrun and I. Nechita, The multiplicative property characterizes $l_p$ and $L_p$ norms, \emph{Conflu. Math.}, Vol. 03 No. 04 (2011), pp. 637-647.
			\bibitem {bauer1} W. Bauer, Berezin-Toeplitz quantization and composition formulas, \emph{Journal of  Functional Analysis} 256 (2009), pp. 3107-3142.
			\bibitem{bauer} W. Bauer and H. Issa, Commuting Toeplitz operators with quasi-homogeneous symbols on the Segal-Bargmann space, \emph{J. Math. Anal. Appl.} 386 (2012), pp. 213-235.
			\bibitem{bhatia} R. Bhatia, Matrix analysis, \emph{Springer Science and Business Media,}, 1996.
			\bibitem{hol} R. Bhatia and J .A.R. Holbrook, Unitary invariance and spectral variation,	\emph{ Linear Algebra Appl.,} 95 (1987), pp. 43-68.
			\bibitem{chu}	Y.-M.Chu, G.-D.Wang and X.-H. Zhang, The Schur multiplicative and harmonic convexities of the complete symmetric function. \emph{Math. Nachr}. 284 (5–6) (2011), pp. 653–663.
			\bibitem {coburn}L. A. Coburn, On the Berezin-Toeplitz calculus, \emph{ Proc. Am. Math. Soc.} 129 (11) (2007), 3331-3338.
			\bibitem{cuttler} A. Cuttler, C. Greene and M. Skandera, Inequalities for symmetric means, \emph{European Journal of Combinatorics}  Vol.32, I 6  (2011), pp. 745-761.
			\bibitem{foll} G.B. Folland, How to integrate a polynomial over a sphere, \emph{American Mathematical Society}, Vol. 108 No. 5 (2001), pp. 446-448.
			\bibitem{folland} G.B. Folland, Real analysis, \emph{John Wiley, New York}, 1999.
			\bibitem{fong} C.-K. Fong and J. A.R. Holbrook, Unitarily invariant operator norms, \emph{Canad. J. Math.}, 35 ( 1983), pp. 274-299.
			\bibitem{good} I. J. Good, A short proof of MacMahon's 'Master Theorem,\emph{ Proc. Cambridge Philos. Soc.} 58 pp. 160.
			\bibitem{hunter} D.B. Hunter,  The positive-definiteness of the complete symmetric functions of even order. Math. \emph{Proc. Camb. Philos. Soc.} 82 (2) (1977), pp. 255–258.
			\bibitem{harver} D.A. Harver, Matrix Algebra From a Statistician's Perspective, \emph{Springer Science and Business Media}, 2008.
			\bibitem{hndrsn} H.V. Henderson and S.R. Searle, S.R., The vec-permutation, the vec-operator and Kronecker products: a review. \emph{Linear Multilinear Algebra} 9 (1981), pp. 271-288.
			\bibitem{Issa} H. Issa, Construction of some classes of commutative Banach and $C^\star$-algebras of Toeplitz operators,\emph{ C. R. Acad. Sci. Paris}, Ser 1 357 (2019), pp. 389-394.
			\bibitem{Issa1} H. Issa, The analysis of Toeplitz operators, commutative Toeplitz algebras and applications to heat kernel constructions, \emph{S.U.B G\"{o}ttingen}, 2012.
			\bibitem{kania} T. Kania, A short proof of the fact that the matrix trace is the expectation of the numerical values, \emph{ American Mathematical Monthly}, 122 Vol. 8 (2015), pp. 782-783.
			\bibitem{krantz} S. G. Krantz, Function theory of several complex variables, \emph{AMS Chelsea Publishing Series, Amer. Math. Soc.}, 2001.
			\bibitem{Li}Chi-K.   Li  and  N. K.  Tsing, Norms that are invariant under unitary similarities and the C-numerical radii, \emph{Linear and Multilinear Algebra,} 24 (1989), pp. 209-222.
			\bibitem{loewy} R. Loewy and D. London, A note on the inverse problem for nonnegative matrices,\emph{ Linear Multilinear Algebra} 6 (1978), pp. 83-90.
			\bibitem{mcd}I. Macdonald, Symmetric Functions and Hall Polynomials, \emph{Oxford science publications: Oxford mathematical monographs}, second edition, 1995.
			\bibitem{mcmhn}P. A. MacMahon, Combinatory analysis,\emph{ Cambridge University Press,} Vol I and II 1915–16.
			\bibitem{marcus}  M. Marcus and L. Lopes, Inequalities for symmetric functions and Hermitian matrices,\emph{ Canad. J. Math.} 9 (1957), pp. 305-312.
			\bibitem{Marshall} A. W. Marshall, I. Olkin and B. C. Arnold, Inequalities: Theory of Majorization and Its Applications, \emph{Springer-Verlag New York,}  second edition,  2011.
			\bibitem{mcleod}J. B. McLeod, On four inequalities in symmetric functions,\emph{ Proceedings of the Edinburgh Mathematical Society,} 11-4 (1959), pp. 211-219.
			\bibitem{mirsky}L. Mirsky, Symmetric gauge functions and unitarily invariant norms,	\emph{Quart . J. Math. , Oxford,} Ser. 2, 11 (1960), pp. 50-59.
			\bibitem{nic}		C. P. Niculescu, Convexity according to the geometric means, \emph{Math. Inequal. Appl.}  3(2) (2000), pp. 155-167.
			\bibitem{roventa} I. Rovenţa, and L. E. Temereanca,  A Note on the Positivity of the Even Degree Complete Homogeneous Symmetric Polynomials, \emph{Mediterr. J. Math.}    (2019),  pp. 1-16.
			\bibitem{rudin} W. Rudin, Function theory in the unit ball of $\mathbb{C}^n$, \emph{A series of comprehensive studies in Mathematics, Springer}, 2008.
			\bibitem{saitoh} S. Saitoh,  Theory of reproducing kernels and its applications, \emph{Pitman research notes in mathematics series, Longman Scientific \& Technical}, 1988.
			\bibitem{sra} S.  Sra, New concavity and convexity results for symmetric polynomials and their ratios,  \emph{Linear and Multilinear Algebra,} 68 Vol 5 (2020),  pp. 1031-1038.
			\bibitem{sra1} 	S. Sra., On inequalities for normalized Schur functions,\emph{ European J. Combin.} 51 (2016), pp. 492–494.
			\bibitem{stein}	E.M. Stein and R. Shakarchi, Real Analysis: Measure Theory, Integration, and Hilbert Spaces, \emph{Princeton, NJ, USA: Princeton Univ. Press}, 2005.
			\bibitem{thompson} R.C. Thompson, Convex and concave functions of singular values of matrix
			sums, \emph{Pacific J. Math.} , 66 (1976), pp. 285-290.
			\bibitem{tong} Y. L. Tong, An ordering theorem for conditionally independent and identically distributed random variables, \emph{Ann. Statist.}, 5 (1977), pp. 274-277.
			\bibitem{zhu} K. Zhu, Operator theory in function spaces, \emph{Mathematical surveys and monographs, Amer. Math. Soc.}, 2007.

}
\end{thebibliography}
		
\end{document}